\newcommand{\Z}{\mathbb{Z}}
\newcommand{\R}{\mathbb{R}}
\newcommand{\N}{\mathbb{N}}
\newcommand{\X}{\mathcal{X}}
\newcommand{\LM}{\mathcal{L}}
\newcommand{\dt}{\delta}
\newcommand{\Dt}{\Delta}
\newcommand{\D}{\mathcal{D}}
\newcommand{\e}{\epsilon}
\newcommand{\E}{\mathcal{E}}
\newcommand{\V}{\mathcal{V}}
\newcommand{\Gr}{\mathcal{G}}
\newcommand{\W}{\mathcal{W}}
\newcommand{\s}{\sigma}
\newcommand{\Sig}{\Sigma}
\newcommand{\row}{\operatorname{Row}}
\newcommand{\col}{\operatorname{Col}}
\newcommand{\outdeg}{\operatorname{outdeg}}
\newcommand{\indeg}{\operatorname{indeg}}
\newcommand{\temp}{\operatorname{temp}}
\newtheorem{example}[theorem]{Example}
\newtheorem{remark}{Remark}[section]
\def\e{{\epsilon}}
\title{A weighted pair graph representation for reconstructibility of
Boolean control networks\thanks{
This work was supported by
National Natural Science Foundation of China (No. 61603109), Fundamental Research Funds for the Central Universities
(No. HEUCF160404), Natural Science Foundation of Heilongjiang Province of China (No. LC2016023),
National Natural Science Foundation of China (No. 61573288), Program for New Century Excellent Talents
in University of Ministry of Education of China, 
and Singapore Ministry of Education Tier 1 Academic Research Grant RG84/13 (2013-T1-002-177).
}}
\author{Kuize Zhang\thanks{College of Automation, Harbin Engineering University, Harbin, 150001, PR China
 and
Department of Electrical and Computer Engineering, Technical University of Munich, 80333 Munich, Germany
	({\tt zkz0017@163.com}).}
        \and Lijun Zhang\thanks{School of Marine Science and Technology, Northwestern Polytechnical University, Xi'an, 710072, PR China
        and College of Automation, Harbin Engineering University, Harbin, 150001, PR China
         ({\tt zhanglj7385@nwpu.edu.cn}).}
         \and Rong Su\thanks{
         School of Electrical and Electronic Engineering, Nanyang Technological University, Singapore 639798, Singapore
         	({\tt rsu@ntu.edu.sg}).}
		}
\begin{document}

\maketitle

\begin{abstract}
	A new concept of weighted pair graphs (WPGs) is proposed to represent a new reconstructibility
	definition for Boolean control networks (BCNs), which is a generalization of the reconstructibility
	definition given in \cite[Def. 4]{Fornasini2013ObservabilityReconstructibilityofBCN}.
	Based on the WPG representation, an effective algorithm for determining the new reconstructibility
	notion for BCNs is designed with the help of the theories of finite automata and formal languages. We
	prove that a BCN is not reconstructible iff its WPG has a complete subgraph. Besides, we prove that a BCN is
	reconstructible in the sense of \cite[Def. 4]{Fornasini2013ObservabilityReconstructibilityofBCN}
	iff its WPG has no cycles, which is simpler to be checked
	than the condition in \cite[Thm. 4]{Fornasini2013ObservabilityReconstructibilityofBCN}.
\end{abstract}

\begin{keywords}
Boolean control network, reconstructibility, weighted pair graph,
finite automaton, formal language,
semi-tensor product of matrices
\end{keywords}

\begin{AMS}
93B99, 68Q45, 94C15, 92B99
\end{AMS}

\pagestyle{myheadings}
\thispagestyle{plain}
\markboth{KUIZE ZHANG AND LIJUN ZHANG AND RONG SU}{A WEIGHTED PAIR GRAPH REPRESENTATION FOR RECONSTRUCTIBILITY}
\section{Introduction}

{\it Reconstructibility} is a basic control-theoretic property. One way it can
be formulated as the property that there is an input sequence such that the current state can be
uniquely determined from the input sequence and the corresponding output sequence, regard-
less of the initial state, which may be considered unknown. Accordingly the key problems
are how to determine whether such input sequences exist and how to find them. Note that for
deterministic systems, once the current state has been determined, all subsequent states can
also be determined by using the input sequences.
As an application, reconstructibility can be used in fault detection for a
mechanical device if the device is reconstructible. If one regards a device as a control system,
and regards the states (resp. events) of the device as the states (resp. outputs) of the system,
then fault detection can be implemented by using event sequences to determine the current
state of the device.

A {\it Boolean (control) network} (BN/BCN) (cf.
\cite{Kauffman1969RandomBN,Ideker(2001),Akutsu2000InferringQualitativeRelations,Albert2000DynamicsofComplexSystems,Kitano2002SystemsBiology,Laschov2013Minimum-timecontrolBN,Zhang2015Invertibility_BCN}),
a discrete-time finite-state dynamical system,
is a simple and effective model to describe genetic regulatory networks (GRNs)
which reflects the behavior and relationships of cells, protein, DNA and RNA in a biological
system. It is pointed out in
\cite{Akutsu(2007)} that
that ``One of the major goals of systems biology is to develop a control theory for complex biological systems''.
Hence studying the control-theoretic
problems of BNs/BCNs is of both theoretical and practical importance. Similarly to the fault
detection for a mechanical device, reconstructibility may also be used in biology to detect
diseases in a living body. In
\cite{Sridharan2012BNFaultDiagnosisOSR},
fault diagnosis in oxidative stress response is investigated
based on a BCN model, and a fault is described as a deviation of the function of the BCN
model. In order to diagnose a fault, two steps should be performed successively: \romannumeral1) use an
input sequence (called homing sequence) to drive the model to a known state; (If the model
is normal and reconstructible, then after feeding the homing sequence into the model, the
current state of the model would be known. It is pointed out in
\cite{Sridharan2012BNFaultDiagnosisOSR}
that ``Knowledge of the
initial status of the internal states is important as all future computations are based on these
values. The Homing sequence is an initial input sequence that brings the network to a known
internal state. So, once the Homing Sequence is given to $N$ (the normal model) and $N_f$ (the
faulty model), $N$ will come to a known internal state.'') \romannumeral2) feed an input sequence (called test
sequence) into the model, compare the output sequences of the normal and faulty networks
to pinpoint the fault. (``Once the Homing sequence has done its job, the Test sequence($T$) is
fed into $N$ and $N_f$, and by comparing the output states of the normal and faulty networks, we
can pinpoint the location of the fault in the network, \dots''. One can use the methods adopted in
\cite{Sridharan2012BNFaultDiagnosisOSR} to pinpoint the fault. Besides, methods for testing fault occurrence
are given in \cite{Fornasini2015FaultDetectionBCN}.)
Based on the above statement, reconstructibility is the first step for diagnosing faults, and the
current state plays an important role. Hereinafter, ususally we will call the input sequence described
in the first and this paragraph that can be used to determine the current state {\it homing}
input sequence for short.
\cite{Fornasini2013ObservabilityReconstructibilityofBCN}
introduces a special reconstructibility notion (see
\cite[Definition 4]{Fornasini2013ObservabilityReconstructibilityofBCN}) 
for BCNs which means each sufficiently long input sequence is a homing input
sequence. If a BCN satisfies this definition, when diagnosing faults, one only needs to pay
attention to the second step; otherwise this definition will tell the user that the BCN is not
reconstructible, indicating that the fault cannot be diagnosed. However, even if a BCN does
not satisfy this definition, there still may exist a homing input sequence, and the user can use
it to reconstruct the current state. Hence it is necessary to investigate the reconstructibility
described in the first paragraph involving whether a homing input sequence exists. The reconstructibility
described in the first paragraph is more general, as it applies to more systems.
Besides, as the one in 
\cite[Definition 4]{Fornasini2013ObservabilityReconstructibilityofBCN},
this reconstructibility definition is also independent of the initial state, so any time can be seen as
the initial time.


The main target of this paper is to design an effective algorithm for determining the reconstructibility 
of BCNs described in the first paragraph. In the sequel, unless otherwise stated, ``reconstructibility''
is always in this sense. The original idea of designing this algorithm comes from our previous paper
\cite{Zhang2014ObservabilityofBCN}.
In \cite{Zhang2014ObservabilityofBCN},
we find the connections between the observability of BCNs and the theories of finite automata and formal languages,
and show how to determine all known four different types of observability of BCNs in the literature
\cite{Cheng2009bn_ControlObserva,Zhao2010InputStateIncidenceMatrix,Cheng2011IdentificationBCN,Fornasini2013ObservabilityReconstructibilityofBCN,Li2013ObservabilityConditionsofBCN,Li2015ControlObservaBCN,Cheng2015NoteonObservabilityBCN}.
In particular, the type of observability first studied in the seminal paper 
\cite{Cheng2009bn_ControlObserva} is determined in \cite{Zhang2014ObservabilityofBCN},
while in \cite{Cheng2009bn_ControlObserva} there is only a sufficient but not necessary
condition.
In our companion paper \cite{Zhang2016ObservabilitySBCN_NAHS},
this idea is also used to determine the observability of switched BCNs,
and further results on how to reduce computational complexity is discussed.
The theories of finite automata and formal languages are among the mathematical foundations of
theoretical computer science. Finite automaton theory involves mainly the study of computational problems
that can be solved by using them. In computational complexity theory, decision problems are typically
defined as formal languages, and complexity classes are defined as the sets of the formal languages
that can be parsed by machines with limited computational power. For the details, we refer the reader to
\cite{Kari2013LectureNoteonAFL},
\cite{Linz2006FormalLanguagesandAutomata}. In the control-theoretic field,
finite automata have been used to describe
discrete event systems (DESs) (cf. \cite{Ramadge1987SupervisoryControlofDES,Lin1988Observability_DES,Wonham1976InternalModelPriniple,Shu2007Detectability_DES,Wonham2014SupervisoryControlDES,Su2011StringExecutionTime,Cassandras2008DES_monograph,Lin2014LinControlofNetworkedDES}, etc.),
where DESs are event-driven systems, and have no normal time sequences,
which are essentially different from the standard input-state-output control system models.


In order to characterize reconstructibility, we first define a new concept of {\it weighted pair graphs} for 
BCNs\footnote{This weighted pair graph is different
from the one defined in \cite{Zhang2014ObservabilityofBCN} used to connect observability of BCNs
and finite automata.},
second we use the graph to transform a BCN into a deterministic finite automaton,
and lastly we test its reconstructibility by verifying the completeness of the automaton.
Using these results, once we know that a BCN is reconstructible, each homing input
sequence can be found. After that we design an algorithm to use a homing input sequence to determine the current state.
Furthermore, after proving more in-depth results on the weighted pair graph by using finite automata,
we directly use the graph to design a remarkably more effective algorithm to determine reconstructibility (see Section
\ref{sec:EffectiveAlgorithmForReconstructibility}).
On the other hand, in Section \ref{sec:Fornasini'sReconstructibility},
as a comparison, we prove that the weighted pair graph has no cycles iff the BCN is reconstructible in the sense of 
\cite[Definition 4]{Fornasini2013ObservabilityReconstructibilityofBCN}, which is simpler to be checked
than the condition in \cite[Theorem 4]{Fornasini2013ObservabilityReconstructibilityofBCN}.

The remainder of this paper is organized as follows. In Section \ref{sec:Preliminaries},
necessary preliminaries about {\it graph theory}, finite automata,
formal languages, the {\it semi-tensor product (STP) of matrices}, and BCNs with
their algebraic forms are introduced.
By using STP, a BCN can be transformed into its algebraic form.
Such an intuitive algebraic form will help to represent
weighted pair graphs and finite automata constructed in the sequel, and also
help to construct examples.
So this paper is in the framework of STP.
A comprehensive introduction to the STP of matrices can be found in
\cite{Cheng2010bn_dynamics,Cheng(2011book)}.
In Section \ref{sec:DeterminingReconstructibility}, how to use finite automata
to determine the reconstructibility of BCNs and how to use homing input sequences
to determine the current state are illusatrated.
Section \ref{sec:EffectiveAlgorithmForReconstructibility} contains 
the main results: we
directly use the weighted pair graph to design
a remarkably more effective algorithm to determine the reconstructibility of BCNs
and analyze its computational complexity. 
In Section \ref{sec:Fornasini'sReconstructibility}, an intuitive algorithm for
determining the reconstructibility shown in
\cite[Definition 4]{Fornasini2013ObservabilityReconstructibilityofBCN} is designed.
The last section is a short conclusion.

\section{Preliminaries}\label{sec:Preliminaries}

Necessary notations:

\begin{itemize}
  \item $\emptyset$: the empty set
  \item $\R_{m\times n}$: the set of $m\times n$ real matrices
  \item $\Z_+$: the set of positive integers (excluding $0$)
  \item $\N$: the set of natural numbers (including $0$)
  \item $\D$: the set $\{0,1\}$
  \item $\delta_n^i$: the $i$-th column of the $n\times n$ identity matrix $I_n$
  \item $\Delta_n$: the set $\{\delta_n^1,\dots,\delta_n^n\}$
	  ($\Dt:=\Dt_2$)
  \item $\col_i(A)$ (resp. $\row_i(A)$): the $i$-th column (resp. row) of matrix $A$
  \item $i\mod j$: the remainder of integer $i$ when divided by integer $j$
  \item $\delta_n[i_1,\dots,i_s]$: {\it logical matrix} (see
	  \cite{Cheng2010bn_dynamics,Cheng(2011book)})
	  $[\delta_n^{i_1},\dots,\delta_n^{i_s}]$ ($i_1,\dots,i_s\in\{1,2,\dots, n\}$)
  \item $\LM_{n\times s}$: the set of $n\times s$ logical matrices, i.e.,
	  $\{\delta_n[i_1,\dots,i_s]|i_1,\dots,i_s\in\{1,2,\dots,n\}\}$
  \item $[M,N]$: the set of consecutive integers $M,M+1,\dots,N$
  \item $|A|$: the cardinality of set $A$
  \item $2^A$: the power set of set $A$
  \item $C_n^i$: binomial coefficient
\end{itemize}

\subsection{Graph theory}

In this subsection we introduce some basic concepts of graph theory.

A {\it directed graph} is a $2$-tuple $(\V,\E)$, where a finite set $\V$ denotes its {\it vertex}
set, and $\E\subset \V\times \V$ denotes its {\it edge} set. Given two vertices
$v_1,v_2\in\V$, if $(v_1,v_2)\in\E$, then we say ``there is an edge from $v_1$ to $v_2$'',
and denote $(v_1,v_2)$ also by $v_1\to v_2$.
$v_1$ is called a parent of $v_2$, and similarly $v_2$ is called a child
of $v_1$.
Given $v_0,v_1,\dots,v_{p}\in\V$, if for all $i\in[0,p-1]$,
$(v_i,v_{i+1})\in\E$, then $v_0\rightarrow \dots\rightarrow v_p$ is called
a {\it path}.
Particularly if $v_0=v_p$, path $v_0\rightarrow \dots\rightarrow v_p$
is called a {\it cycle}. A cycle $v_0\rightarrow\dots\rightarrow v_p$ is called {\it simple},
if $v_0,\dots,v_{p-1}$ are pairwise different.
An edge from a vertex to itself is called a {\it self-loop}.
Given vertices $v_0,\dots,v_p\in\V$, and denote $\{v_0,\dots,v_p\}$ by $\V_p$.
The {\it subgraph} of graph $(\V,\E)$ generated by $\V_p$ is defined as graph $(\V_p,\E_p)$,
where $\E_p=(\V_p\times\V_p)\cap \E$.
A directed graph $(\V,\E)$ is called {\it strongly connected}, if for all vertices $u,v\in\V$,
there is a path from $u$ to $v$.

Given a set $\Sig$, a weighted directed graph $(\V,\E,\W,2^{\Sig})$ is a directed graph $(\V,\E)$ such that
each edge $e\in\E$ is labeled by a {\it weight} $w\subset \Sig$, represented by a function
$\W:\E\to 2^{\Sig}$.  Given vertex $v\in\V$, $|\cup_{u\in\V,(v,u)\in\E}\W( (v,u))|$
is called the {\it outdegree} of vertex $v$, and is denoted by $\outdeg(v)$; similarly
$|\cup_{u\in\V,(u,v)\in\E}\W( (u,v))|$
is called the {\it indegree} of vertex $v$, and is denoted by $\indeg(v)$.
A subgraph is called complete, if each of its vertices has outdegree $|\Sig|$.

\subsection{Finite automata and formal languages}

We use $\Sig$, a nonempty finite set to denote the {\it alphabet}\footnote{
A nonempty finite set is an alphabet iff for each finite sequence $u$ of its elemetns,
any other finite sequence of its elements is not the same as $u$.
For example, $\{0,01\}$ is an alphabet, but $\{0,00\}$
is not, as  $000=0$ $00=00$ $0$.}.
Elements of $\Sig$ are called {\it letters}. A {\it word} is a finite sequence of letters.
The empty word is denoted by $\e$.
$|\cdot|$ denotes the length of word $\cdot$. For example, $|abc|=3$ over
alphabet $\{a,b,c\}$, $|\e|=0$.
$\Sig^p$ denotes the set of words of length $p$ over alphabet $\Sig$.
In particularly, $\Sig^0:=\{\e\}$. Hence
$\cup_{i=0}^{\infty}\Sig^i=\Sig^*$ denotes the set of all words over alphabet
$\Sig$. For example,
$$\{0,1\}^*=\{\e,0,1,00,01,10,11,000,\dots\}.$$
The set of infinite sequences of letters over alphabet $\Sig$ is denoted by $\Sig^{\N}$.
That is, $\Sig^{\N}=\{a_0a_1\dots|a_i\in\Sig,i=0,1,\dots\}$.
Each $u\in\Sig^{\N}$  satisfies $|u|=\infty$. 
Given $u\in(\Sig^*\cup\Sig^{\N})\setminus\{\e\}$ and integers $i,j$ satisfying
$0\le i\le j<|u|$, $u(i)$ or $u[i]$ denotes the $i$-th letter of $u$,
$u[i,j]$ denotes word $u(i)u(i+1)\dots u(j)$.
A {\it(formal) language} is defined as a subset of $\Sig^*$.

Next we introduce the concepts of deterministic finite automata (DFAs) and regular languages.
A DFA is a 5-tuple $A=(S,\Sig,\s,s_0,F)$:
\begin{itemize}
	\item The finite state set $S$. At all times the internal state is
		some $s\in S$.
	\item The input alphabet $\Sig$. The automaton reads only words
		over the alphabet.
	\item The transition partial function
		describes how the automaton changes its
		internal state. It is a partial function
		$$\s:S\times\Sig\to S$$
		that maps a (state, input letter)-pairs to a state, that is,
		$\s$ is a function defined on a subset of $S\times\Sig$.
		If the automaton is in state $s$, the current input
		letter is $a$,
		then the automaton changes its internal state to
		$\s(s,a)$ and moves to the next input letter, if $\s$ is
		well defined at $(s,a)$; and stops, otherwise.
	\item The initial state $s_0\in S$ is the internal state of the automaton
		before any letter has been read.
	\item The set $F\subset S$ of final states specifies which states
		are accepted and which are rejected. If
		the internal state of the automaton, after reading the whole
		input, is some state of $F$ then the word is accepted,
		otherwise rejected.
\end{itemize}

We call a
DFA complete if $\s$ is a function from $S\times\Sig$ to $S$.

In order to represent regular languages, we introduce an extended
transition function $\s^*:S\times \Sig^*\to S$. $\s^*$ is recursively
defined as
\begin{itemize}
	\item $\s^*(s,\e)=s$ for all $s\in S$.
	\item $\s^*(s,wa)=\s(\s^*(s,w),a)$ for all $s\in S$, $w\in\Sig^*$
		and $a\in\Sig$, if $\s^*$ is well defined at $(s,w)$
		and $\s$ is well defined at $(\s^*(s,w),a)$.
\end{itemize}

Particularly, for all $s\in S$ and $a\in\Sig$, $\s^*(s,a)=
\s(\s^*(s,\e),a)=\s(s,a)$, if $\s$ is well defined at $(s,a)$.
Hence we will use $\s$ to denote $\s^*$ briefly,
as no confusion will occur. 

Given a DFA $A=(S,\Sig,\s,s_0,F)$, a word $w\in\Sig^*$ is called {\it accepted} by
this DFA, if $\s(s_0,w)\in F$. A language $L\subset\Sig^*$ is called {\it recognized}
by this DFA, if $L=\{w\in\Sig^*|\s(s_0,w)\in F\}$, and is denoted by $L(A)$.
That is, a DFA $A$ is a finite description of a regular language $L(A)$.
It is easy to see that a DFA accepts the empty word $\e$ iff its initial state
is final.
It is worth noting that not all languages are regular. For example, language
$\{a^ib^i|i\ge0\}$ (see \cite[Example 24]{Kari2013LectureNoteonAFL})
that contains all words that begin with an arbitrary number of $a$'s, followed
by equally many $b$'s is not regular.

In order to represent a DFA and transform a BCN into a DFA related
to its reconstructibility, we introduce the {\it transition graph} of a DFA
$A=(S,\Sig,\s,s_0,F)$.
A weighted directed graph $G_A=(V,E,W)$ is called the transition graph of the DFA $A$,
if the vertex set $V=S$, the edge set $E\subset V\times V$ and the weight
function $W:E\to 2^\Sig$ satisfy the following conditions:
for all $(s_i,s_j)\in V\times V$,
$(s_i,s_j)\in E$ iff there is a letter $a\in\Sig$ such that $\s(s_i,a)=s_j$;
if  $(s_i,s_j)\in E$, then $W( (s_i,s_j))$ equals the set of letters
$a\in\Sig$ such that $\s(s_i,a)=s_j$, that is, $\{a\in\Sig|\s(s_i,a)=s_j\}$.
In the transition graph of a DFA, an input arrow is added to the vertex
denoting the initial state, double circles or rectangles are used to denote the
final states, the curly bracket ``$\{\}$''
in the weights of edges are not drawn. See the following examples.

\begin{example}\label{exam1_observability}
	The graph in Fig. \ref{fig1:observability} represents DFA
	$A=(\{s_0,s_1,s_2\},\{0,1\},\s,s_0,
	\{s_0,s_1\})$, where
	\begin{equation*}
		\begin{split}
			&\s(s_0,0)=s_0,\quad \s(s_1,0)=s_0,\quad \s(s_2,0)=s_2,\\
			&\s(s_0,1)=s_1,\quad \s(s_1,1)=s_2,\quad \s(s_2,1)=s_1.
		\end{split}
	\end{equation*}

	It is easy to see that $\e\in L(A)$, $010111\in L(A)$ and $010110\notin L(A)$.

	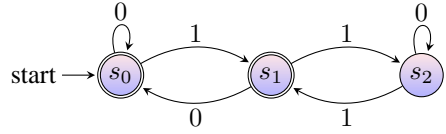
\begin{figure}
        \centering
\begin{tikzpicture}[>=stealth',shorten >=1pt,auto,node distance=2 cm, scale = 1, transform shape,
	->,>=stealth,inner sep=2pt,state/.style={shape=circle,draw,top color=red!10,bottom color=blue!30}]
	\node[initial,accepting,state] (s0)                                 {$s_0$};
	\node[state,accepting] (s1) [right of = s0]                         {$s_1$};
	\node[state] (s2) [right of = s1]                         {$s_2$};
	\path [->] (s0) edge [loop above] node {$0$} (s0)
	           (s2) edge [loop above] node {$0$} (s2)
		   (s1) edge [bend left] node {$0$} (s0)
		   (s0) edge [bend left] node {$1$} (s1)
		   (s1) edge [bend left] node {$1$} (s2)
		   (s2) edge [bend left] node {$1$} (s1);
        \end{tikzpicture}
	\caption{Transition graph of the DFA $A$ in Example \ref{exam1_observability}.}
	\label{fig1:observability}
	\end{figure}



\end{example}

\begin{example}\cite[Example 2]{Kari2013LectureNoteonAFL}\label{exam11_observability}
	The DFA shown in Fig. \ref{fig14:observability} recognizes the regular language of
	words over the alphabet $\{a,b\}$ that contain the word $aa$.

	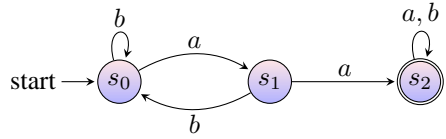
\begin{figure}
        \centering
\begin{tikzpicture}[>=stealth',shorten >=1pt,auto,node distance=2 cm, scale = 1, transform shape,
	->,>=stealth,inner sep=2pt,state/.style={shape=circle,draw,top color=red!10,bottom color=blue!30}]
	\node[initial,state] (s0)                                 {$s_0$};
	\node[state] (s1) [right of = s0]                         {$s_1$};
	\node[state,accepting] (s2) [right of = s1]                         {$s_2$};
	\path [->] (s0) edge [loop above] node {$b$} (s0)
	           (s2) edge [loop above] node {$a,b$} (s2)
		   (s1) edge [bend left] node {$b$} (s0)
		   (s0) edge [bend left] node {$a$} (s1)
		   (s1) edge node {$a$} (s2);
        \end{tikzpicture}
	\caption{Transition graph of the DFA $A$ in Example \ref{exam11_observability}.}
	\label{fig14:observability}
\end{figure}

\end{example}

Now we give a proposition on finite automata that will be frequently used in the sequel.

\begin{proposition}\cite{Zhang2014ObservabilityofBCN}\label{prop6_observability}
	Given a DFA $A=(S,\Sig,\s,s_0,F)$, assume that $F=S$ and for each
	$s$ in $S$, there is a word $u\in\Sig^*$ such that
	$\s(s_0,u)=s$. Then $L(A)=\Sig^*$ iff $A$ is complete.
\end{proposition}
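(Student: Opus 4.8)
The plan is to prove the two directions separately, both being fairly direct once the hypotheses are unpacked. For the ``if'' direction, suppose $A$ is complete, i.e.\ $\s$ is a total function on $S\times\Sig$. Then by an easy induction on word length, $\s^*$ is well defined at every pair $(s_0,w)$ with $w\in\Sig^*$: the base case $\s^*(s_0,\e)=s_0$ holds by definition, and if $\s^*(s_0,w)$ is defined then $\s^*(s_0,wa)=\s(\s^*(s_0,w),a)$ is defined because $\s$ is total. Since $F=S$, we get $\s^*(s_0,w)\in S=F$ for every $w\in\Sig^*$, so every word is accepted and $L(A)=\Sig^*$.

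For the ``only if'' direction, I would argue by contraposition: assume $A$ is not complete and show $L(A)\neq\Sig^*$. Not being complete means there exist a state $s\in S$ and a letter $a\in\Sig$ at which $\s$ is undefined. By the reachability hypothesis, pick $u\in\Sig^*$ with $\s^*(s_0,u)=s$. I claim the word $ua$ is not accepted. Indeed, $\s^*(s_0,ua)=\s(\s^*(s_0,u),a)=\s(s,a)$, which is undefined; since acceptance of a word $w$ requires $\s^*(s_0,w)$ to be well defined and lie in $F$, the word $ua\notin L(A)$. Hence $L(A)\subsetneq\Sig^*$.

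The only subtlety worth being careful about is the convention for the extended transition function on partial DFAs: a word is accepted precisely when the run from $s_0$ does not get stuck and ends in $F$, so ``$\s^*(s_0,w)$ undefined'' must be read as ``$w\notin L(A)$'' (equivalently $w$ is rejected). The excerpt's recursive definition of $\s^*$ only declares $\s^*(s,wa)$ when both $\s^*(s,w)$ and $\s$ at the resulting pair are defined, so this reading is forced, and it is exactly what makes the contrapositive argument go through. I do not anticipate a genuine obstacle here; the statement is essentially a bookkeeping lemma about total versus partial transition functions, and the only thing to get right is this acceptance convention together with the two cheap inductions on word length.
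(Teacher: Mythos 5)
Your proposal is correct: the ``if'' direction by induction on word length using totality of $\s$ together with $F=S$, and the ``only if'' direction by contraposition, extending a word $u$ reaching the deficient state $s$ by the missing letter $a$ so that $ua\notin L(A)$, with the acceptance convention for partial transition functions handled properly. The paper itself states this proposition as a cited result from \cite{Zhang2014ObservabilityofBCN} without reproducing a proof, and your argument is exactly the standard one that establishes it, so there is nothing to flag.
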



\subsection{The semi-tensor product of matrices}

Since the framework of STP is used in this paper, some necessary concepts are introduced.

\begin{definition}\cite{Cheng(2011book)}
	Let $A\in \R_{m\times n}$, $B\in \R
	_{p\times q}$, and $\alpha=\mbox{lcm}
	(n,p)$ be the least common multiple of $n$ and $p$. The STP of $A$
	and $B$ is defined as
	\begin{equation*}
		A\ltimes B = \left(A\otimes I_{\frac{\alpha}{n}}\right)\left(B\otimes I_{\frac
		{\alpha}{p}}\right),
		\label{def_of_stp}
	\end{equation*}
	where $\otimes$ denotes the Kronecker product.
\end{definition}

From this definition, it follows that the conventional
product of matrices is a particular case of STP.
Since STP keeps many properties of the conventional product \cite{Cheng(2011book)},
e.g., the associative law, the distributive law,
etc.,
we usually omit the symbol ``$\ltimes$'' hereinafter.

\subsection{Boolean control networks and their algebraic forms}

In this paper, we investigate the following BCN
with $n$ state nodes, $m$ input nodes and $q$ output nodes:
\begin{equation}\label{BCN1}
\begin{split}
 &x (t + 1) = f (u (t),x (t)), \\
 &y(t)=h(x (t)),\\
 \end{split}
\end{equation}
where $t=0,1,\dots$, for each such $t$, $x(t)\in\D^n$, $u(t)\in\D^m$, $y(t)\in\D^q$,
$f:\D^{n+m}\to\D^n$ and $h:\D^n\to\D^q$ are Boolean mappings.

Using the STP of
matrices, (\ref{BCN1}) can be represented equivalently in
the following algebraic form \cite{Cheng2009bn_ControlObserva}:
\begin{equation}\label{BCN2}
\begin{split}
 &x(t + 1) = Lu(t)x(t),\\
 &y(t) = Hx(t),
\end{split}
\end{equation}
where $t=0,1,\dots$, for each such $t$, $x(t)\in\Delta_{N}$, $u(t)\in\Delta_{M}$ and $y(t)\in\Dt_{Q}$ denote
states, inputs and outputs, respectively,
$L\in\LM_{N\times (NM)}$, $H\in\LM_{Q\times N}$,
hereinafter, $N:=2^n$, $M:=2^m$ and $Q:=2^q$. In the framework of STP,
$N,M,Q$ can be any positive integers.
When $N,M,Q$ are not necessarily
powers of $2$, the corresponding network is called mix-valued (or finite-valued)
logical control network (cf. \cite{Cheng(2011book)}).
The details on the properties of STP, 
and how to transform a BCN into its equivalent algebraic form
can be found in \cite{Cheng(2011book)}.

\section{Preliminary results: using finite automata to determine reconstructibility}
\label{sec:DeterminingReconstructibility}

\subsection{Preliminary notations}

The input-state-output-time transfer graph
of the BCN \eqref{BCN2} is shown in Fig.
\ref{fig:input-state-output-time-graph}.

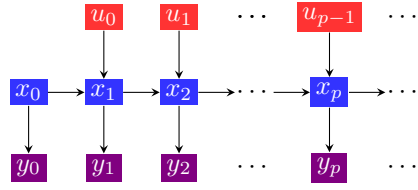
\begin{figure}
        \centering
        \begin{tikzpicture}[->,>=stealth,node distance=1.0cm]
          \tikzstyle{state}=[shape=rectangle,fill=blue!80,draw=none,text=white,inner sep=2pt]
          \tikzstyle{input}=[shape=rectangle,fill=red!80,draw=none,text=white,inner sep=2pt]
          \tikzstyle{output}=[shape=rectangle,fill=red!50!blue,draw=none,text=white,inner sep=2pt]
          \tikzstyle{disturbance}=[shape=rectangle,fill=red!20!blue,draw=none,text=white,inner sep=2pt]
	  \tikzstyle{time}=[inner sep=0pt,minimum size=5mm]
	  \node [state] (x0)              {$x_0$};
          \node [state] (x1) [right of=x0] {$x_1$};
	  \node [state] (x2) [right of=x1] {$x_2$};
	  \node [input] (u0) [above of=x1] {$u_0$};
	  \node [input] (u1) [right of=u0] {$u_1$};
	  \node [time] (x3) [right of=x2] {$\cdots$};
	  \node [time] (u2) [above of=x3] {$\cdots$};
	  \node [state] (x4) [right of=x3] {$x_p$};
	  \node [time] (u2) [above of=x3] {$\cdots$};
	  \node [input] (u3) [above of=x4] {$u_{p-1}$};
	  \node [time] (x5) [right of=x4] {$\cdots$};
	  \node [time] (u4) [above of=x5] {$\cdots$};
	  \node [output] (y0) [below of=x0] {$y_0$};
	  \node [output] (y1) [below of=x1] {$y_1$};
	  \node [output] (y2) [below of=x2] {$y_2$};
	  \node [output] (y4) [below of=x4] {$y_p$};
	  \node [time] (y3) [below of=x3] {$\cdots$};
	  \node [time] (y5) [below of=x5] {$\cdots$};
          \path (x0) edge (x1)
	        (x1) edge (x2)
		(u0) edge (x1)
		(u1) edge (x2)
		(x2) edge (x3)
		(x3) edge (x4)
		(u3) edge (x4)
		(x4) edge (x5)
		(x0) edge (y0)
		(x1) edge (y1)
		(x2) edge (y2)
		(x4) edge (y4);
        \end{tikzpicture}
	\caption{Input-state-output-time transfer graph of BCN \eqref{BCN2}, where subscripts denote time steps, $x_0,x_1,\dots$ denote states, $u_0,u_1,\dots$ denote inputs, $y_1,y_2,\dots$ denote outputs, and arrows infer dependence.}
	\label{fig:input-state-output-time-graph}
    \end{figure}

Now we define the following mappings from the set of input sequences to the set of state/output
sequences generated by the BCN \eqref{BCN2}.
Regarding $\Dt_N,\Dt_M,\Dt_Q$ as
alphabets, then finite input/state/output sequences can be seen as words in
$(\Dt_M)^*/(\Dt_N)^*/(\Dt_Q)^*$. For all $x_0\in\Dt_N$ and $p\in\N$, \begin{enumerate}
	\item
		if $p=0$,
		\begin{equation}
			\begin{split}
				L_{x_0}^p&:\{\e\}\to\{x_0\},\e\mapsto x_0,\\
				(HL)_{x_0}^p&:\{\e\}\to\{y_0\},\e\mapsto y_0,
			\end{split}
			\label{lx00}
		\end{equation}
		else
	\item
		\begin{equation}\label{lx0n}
			\begin{split}
				&L_{x_0}^p:(\Dt_M)^{p}\to(\Dt_N)^{p+1},u_0
				\dots u_{p-1}\mapsto x_0x_1\dots x_p,\\
				&(HL)_{x_0}^p:(\Dt_M)^{p}\to(\Dt_Q)^{p+1},u_0
				\dots u_{p-1}\mapsto y_0y_1\dots y_p,\\
			\end{split}
		\end{equation}
	\item
		\begin{equation}\label{lx0N}
			\begin{split}
				&L_{x_0}^{\N}:(\Dt_M)^{\N}\to(\Dt_N)^{\N},u_0u_1
				\dots \mapsto x_0x_1\dots,\\
				&(HL)_{x_0}^{\N}:(\Dt_M)^{\N}\to(\Dt_Q)^{\N},u_0u_1
				\dots \mapsto y_0y_1\dots .\\
			\end{split}
		\end{equation}
\end{enumerate}

\subsection{Reconstructibility of Boolean control networks}

In this subsection, we give the formal definition of reconstructibility.

\begin{definition}
	A BCN \eqref{BCN2} is called reconstructible, if there is an input sequence
	$U\in(\Dt_M)^{p}$	for some $p\in\N$ such that for all different $x_0,\bar x_0\in\Dt_N$,
	$L_{x_0}^{p}(U)[p]\ne L_{\bar x_0}^{p}(U)[p]$ implies $(HL)_{x_0}^{p}(U)\ne (HL)_
	{\bar x_0}^{p}(U)$.
	\label{def1_reconstructibility}
\end{definition}

Definition \ref{def1_reconstructibility} shows that if a BCN \eqref{BCN2} is reconstructible,
then there is an input sequence (called homing input sequence) such that no matter what the initial state the BCN is in,
one can use the corresponding output sequence to determine the current state.
Note that this reconstructibility is independent of the initial state, so any time can be seen as the initial
time.

From Definition \ref{def1_reconstructibility}, one immediately deduces that a BCN \eqref{BCN2}
is not reconstructible iff for all $p\in\N$ and input sequences $U\in(\Dt_M)^p$
there exist different states $x_0,\bar x_0\in\Dt_N$ such that,
$L_{x_0}^{p}(U)[p]\ne L_{\bar x_0}^{p}(U)[p]$ and
$(HL)_{x_0}^{p}(U)= (HL)_{\bar x_0}^{p}(U)$.

Furthermore,
the following proposition will play an important role in determining the reconstructibility of the BCN
\eqref{BCN2}.

\begin{proposition}\label{prop1_reconstructibility}
	A BCN \eqref{BCN2} is not reconstructible iff for all $p\in\N$
	and input sequences $U\in(\Dt_M)^p$ there exist different states
	$x_0,\bar x_0\in\Dt_N$ such that $L_{x_0}^{p}(U)[i]\ne L_{\bar x_0}^{p}(U)[i]$
	for any $i\in[0,p]$ and
	$(HL)_{x_0}^{p}(U)= (HL)_{\bar x_0}^{p}(U)$.
\end{proposition}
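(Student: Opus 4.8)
The plan is to prove Proposition \ref{prop1_reconstructibility} by establishing both implications, with the ``only if'' direction being the substantive one. The ``if'' direction is immediate: the stated condition (existence of $x_0 \ne \bar x_0$ with $L_{x_0}^p(U)[i] \ne L_{\bar x_0}^p(U)[i]$ for \emph{all} $i \in [0,p]$ and equal output sequences) is logically stronger than — and hence implies — the condition ``$L_{x_0}^p(U)[p] \ne L_{\bar x_0}^p(U)[p]$ and equal output sequences'' appearing in the characterization of non-reconstructibility recorded just before the proposition. So the work is entirely in showing the converse.

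For the ``only if'' direction, suppose the BCN is not reconstructible, so for every $p \in \N$ and every $U \in (\Dt_M)^p$ there is \emph{some} pair of distinct states $x_0, \bar x_0$ with $L_{x_0}^p(U)[p] \ne L_{\bar x_0}^p(U)[p]$ and $(HL)_{x_0}^p(U) = (HL)_{\bar x_0}^p(U)$. I want to upgrade ``distinct at time $p$'' to ``distinct at every time $i \in [0,p]$.'' First I would observe the key determinism property already noted in the Introduction: since the BCN is deterministic, for a fixed input sequence $U$, if two trajectories agree at some time $j$ then they agree at all later times $t \ge j$ (they both satisfy $x(t+1) = Lu(t)x(t)$). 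Contrapositively, if $x_0$ and $\bar x_0$ produce state trajectories that differ at time $p$, then they must differ at \emph{every} $i \in [0,p]$ — otherwise, agreement at some $j \le p$ would force agreement at $p$, a contradiction. This single observation does almost all of the work: the pair $(x_0,\bar x_0)$ witnessing non-reconstructibility at time $p$ already has state trajectories distinct at all $i \in [0,p]$, and its output sequences agree by hypothesis. Hence the strengthened condition holds for that same pair, and we are done.

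I would phrase this carefully as a lemma or inline claim: \emph{Fix $U \in (\Dt_M)^p$ and $x_0 \ne \bar x_0$; if $L_{x_0}^p(U)[p] \ne L_{\bar x_0}^p(U)[p]$, then $L_{x_0}^p(U)[i] \ne L_{\bar x_0}^p(U)[i]$ for all $i \in [0,p]$.} The proof is a one-line induction / minimal-counterexample argument on determinism: let $j$ be the largest index with $L_{x_0}^p(U)[j] = L_{\bar x_0}^p(U)[j]$ if such an index existed with $j < p$; then applying $x \mapsto L\,U[j]\,x$ gives equality at $j+1$, contradicting maximality unless $j = p$. Since we assumed inequality at $p$, no such $j$ exists, i.e., the trajectories differ everywhere on $[0,p]$. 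The case $p = 0$ is trivial since then $[0,p] = \{0\}$ and the hypothesis is exactly the conclusion.

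I do not anticipate a real obstacle here — the proposition is essentially a clean reformulation powered by determinism, and the only thing to be careful about is bookkeeping with the index set $[0,p]$, the convention for $p = 0$ (where $U = \e$), and making sure the quantifier structure (``for all $p$, for all $U$, there exist $x_0, \bar x_0$'') is preserved verbatim when passing from the pre-proposition characterization to the proposition's statement. If I wanted to be maximally explicit I would also note that the map $x \mapsto L u x$ on $\Dt_N$ is well-defined (as $L \in \LM_{N \times NM}$), so ``agreement at time $j$ forces agreement at time $j+1$'' is literally function application, not merely an implication about dynamics.
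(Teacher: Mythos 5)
Your proof is correct, and it is exactly the argument the paper intends: the paper states this proposition without proof, relying implicitly on the pre-proposition characterization of non-reconstructibility together with the determinism observation you make explicit (agreement of two trajectories under a common input at some time forces agreement at all later times, so disagreement at time $p$ forces disagreement at every $i\in[0,p]$). Your handling of the ``if'' direction as a weakening and of the $p=0$ case is also consistent with the paper's conventions, so nothing is missing.
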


%
%
%

\subsection{Weighted pair graphs}\label{WPG}

In this subsection, we define a weighted directed graph for the BCN \eqref{BCN2},
named weighted pair graph, which is the key tool used for determining
its reconstructibility.


\begin{definition}\label{pairgraph_reconstructibility}
	Given a BCN \eqref{BCN2},
	a weighted directed graph $\Gr=(\V,\E,\W,2^{\Dt_M})$, where $\V$ denotes the vertex
	set, $\E\subset\V\times\V$ denotes the edge set, and
	$\W:\E\to2^{\Dt_M}$ denotes the weight function,
	is called the weighted pair graph of the BCN, if
	$\V = \{\{x,x'\}|x,x'\in\Dt_N,x\ne x',Hx=Hx'\}$;
			\footnote{In \cite{Zhang2014ObservabilityofBCN},
			$\V = \{(x,x')|x,x'\in\Dt_N,Hx=Hx'\}$.
			The unique difference between the weighted pair graph used to
			determine the observability of BCNs
			and the one defined here
			is that they have different vertex sets. Note that $\{x,x'\}$
			is an unordered pair, i.e., $\{x,x'\}=\{x',x\}$.}
			for all $(\{x_1,x_1'\},\{x_2,x_2'\})\in\V\times\V$,
			$(\{x_1,x_1'\},\{x_2,x_2'\})\in\E$ iff there exists $u\in\Dt_M$
	such that $Lux_1=x_2$ and $Lux_1'=x_2'$, or, $Lux_1=x_2'$ and $Lux_1'=x_2$;
	for all edges $e=(\{x_1,x_1'\},\{x_2,x_2'\})\in\E$,
	$\W(e)=\{u\in\Dt_M|Lux_1=x_2\text{ and }Lux_1'=x_2',\text{ or, }Lux_1=x_2'\text{ and }Lux_1'=x_2
	\}$.
\end{definition}

Similar to the transition graph of a DFA, we do not draw the curly
bracket ``$\{\}$'' to denote the weights of edges either.
Hereinafter, if $\Dt_M$ is known, we use $(\V,\E,\W)$ to denote $(\V,\E,\W,2^{\Dt_M})$
for short.
From Definition \ref{pairgraph_reconstructibility} it follows that the vertex set $\V$
consists of all pairs of different states that produce the same output.
There is an edge from vertex $v_1\in\V$ to vertex $v_2\in\V$ iff there is an
input driving one state in $v_1$ to one state in $v_2$ meanwhile driving
the other state in $v_1$ to the other state in $v_2$.
Later on, we will show that such a directed graph will be used to return a DFA that determines the
reconstructibility of the BCN \eqref{BCN2}.

\subsection{Determining reconstructibility}\label{sec3}

Next we design Algorithm \ref{alg5_reconstructibility}
that receives the weighted pair graph $\Gr$ of a BCN \eqref{BCN2}, and returns a DFA $A_{\V}$.
The DFA accepts exactly all finite
input sequences that cannot be used to determine the corresponding current states, i.e., the
non-homing input sequences.

Note that in Algorithm \ref{alg5_reconstructibility},  $\V$ is the initial state of
$A_{\V}$. Regarding $\V$ as a vertex of the transition graph of $A_{\V}$,
this algorithm first finds out all new children
of $\V$ and new transitions, second finds out all new children of all known children of $\V$ and new transitions,
and so on.
Since there are finitely many vertices, after a finite number of steps, the algorithm terminates.

In Algorithm \ref{alg5_reconstructibility},
$S$, $S^1_{\temp}$, $S^2_{\temp}$ can be seen as three containers consisting of some states of
$A_{\V}$. According to this, $s,s_j$ are states of $A_{\V}$, i.e.,
sets of vertices of $\Gr$; $v,v_s$ are vertices of $\Gr$.
At each step, the algorithm finds new
states of $A_{\V}$ that do not belong to $S$, puts them into $S^2_{\temp}$,
and at the end of each step, clears $S^1_{\temp}$,
puts the elements of $S^{2}_{\temp}$ into both $S$ and
$S^1_{\temp}$, and then clears $S^{2}_{\temp}$. The algorithm repeats this step until $S^{1}_{\temp}$ becomes empty,
i.e., all states of $A_{\V}$ have been found.

\begin{algorithm}
\caption{An algorithm for returning a DFA  that
determines the reconstructibility of the BCN \eqref{BCN2}}
\label{alg5_reconstructibility}
\begin{algorithmic}[1]
    \REQUIRE A BCN \eqref{BCN2} and its weighted pair graph $\Gr=(\V,\E,\W)$
    \ENSURE A DFA $A_{\V}$
	\STATE
		Let $S$, $S$, $\Dt_M$, $\s$ and $\V$ be the state set, the final state set, the alphabet, the partial transition
		function and the initial state of the DFA, respectively
	\STATE $S:=\{\V\}$, $S_{\temp}^{1}:=\{\V\}$, $S_{\temp}^{2}:=\emptyset$
	\WHILE{$S_{\temp}^{1}\ne\emptyset$}
	\FOR{all $s\in S_{\temp}^{1}$ and $j\in[1,M]$}
	\STATE $s_j:=\{v_s\in\V|\text{there is }v\in s\text{ such that }(v,v_s)\in\E\text{ and }
		\dt_M^j\in\W((v,v_s))\}$
		\IF{$s_j\ne\emptyset$ and $s_j\notin S$}
		\STATE $S:=S\cup \{s_j\}$, $S_{\temp}^{2}:=S_{\temp}^2\cup \{s_j\}$,
			$\s(s,\dt_M^j):=s_j$
			\ELSE\IF{$s_j\ne\emptyset$}
			\STATE $\s(s,\dt_M^j):=s_j$
			\ENDIF
		\ELSE
		\STATE $\s$ is not well defined at $(s,\dt_M^j)$
		\ENDIF
	\ENDFOR
	\STATE $S_{\temp}^1:=S_{\temp}^{2}$, $S_{\temp}^2:=\emptyset$
	\ENDWHILE
	\end{algorithmic}
\end{algorithm}

Based on Proposition \ref{prop1_reconstructibility} and Algorithm \ref{alg5_reconstructibility},
the following theorem holds.
\begin{theorem}
	A BCN \eqref{BCN2} is not reconstructible
	iff the DFA $A_{\V}$ generated by
	Algorithm \ref{alg5_reconstructibility} recognizes language $(\Dt_M)^*$, i.e.,
	$L(A_{\V})=(\Dt_M)^*$.
	\label{thm1_reconstructibility}
\end{theorem}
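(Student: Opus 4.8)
The plan is to connect the combinatorial structure of the DFA $A_{\V}$ produced by Algorithm \ref{alg5_reconstructibility} to the characterization of non-reconstructibility given in Proposition \ref{prop1_reconstructibility}, and then invoke Proposition \ref{prop6_observability}. First I would record the two structural facts about $A_{\V}$ that are immediate from the construction: (i) its final state set equals its whole state set $S$ (line 1 of the algorithm), and (ii) every state $s\in S$ is reachable from the initial state $\V$, since states are only ever added to $S$ at the moment a transition into them is defined from an already-reached state. These are exactly the two hypotheses of Proposition \ref{prop6_observability}, so that proposition tells us $L(A_{\V})=(\Dt_M)^*$ iff $A_{\V}$ is complete, i.e. iff for every reachable state $s$ and every $j\in[1,M]$ the set $s_j$ computed in line 5 is nonempty. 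Hence it suffices to prove: the BCN is not reconstructible iff $s_j\ne\emptyset$ for all reachable $s$ and all $j$.

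The core of the argument is an induction on word length establishing the right invariant for the states of $A_{\V}$. Concretely, for an input word $U=\dt_M^{j_1}\dots\dt_M^{j_p}\in(\Dt_M)^p$, I would show by induction on $p$ that $\s(\V,U)$ is well defined and equals the set
\[
\{\{x,x'\}\in\V \mid L_{x_0}^{p}(U')[i]\ne L_{x'_0}^{p}(U')[i]\text{ is impossible unless }\dots\}
\]
— more precisely, the set of all vertices $\{x,x'\}$ of $\Gr$ for which there exist initial states $x_0\ne x'_0$ with $(HL)_{x_0}^{p}(U)=(HL)_{x'_0}^{p}(U)$, with $L_{x_0}^p(U)[i]\ne L_{x'_0}^p(U)[i]$ for all $i\in[0,p]$, and with $\{L_{x_0}^p(U)[p],L_{x'_0}^p(U)[p]\}=\{x,x'\}$. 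The base case $p=0$ holds because $\s(\V,\e)=\V$, which by Definition \ref{pairgraph_reconstructibility} is exactly the set of pairs of distinct states with the same output. The inductive step uses the definition of $\E$ and $\W$ in the weighted pair graph together with line 5 of the algorithm: appending the letter $\dt_M^{j}$ to $U$ replaces each pair $\{x,x'\}$ in the current state by $\{Lux,Lux'\}$ where $u=\dt_M^{j}$, provided that pair is still a vertex of $\Gr$ (same output) and the two trajectories have not collided — and the "never collided before" part is carried along by the induction hypothesis, while the "does not collide now, same current output" part is precisely what being an edge of $\Gr$ with weight containing $u$ encodes. Here the unordered-pair vertex set of $\Gr$ (as opposed to the ordered pairs used in \cite{Zhang2014ObservabilityofBCN}) is what makes the edge relation match "drives one state to one, the other to the other".

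With this invariant in hand the equivalence falls out. If $s_j=\emptyset$ for some reachable $s=\s(\V,U)$ and some $j$, then $U\dt_M^{j}$ is a word with $\s(\V,U\dt_M^{j})$ undefined, so $L(A_{\V})\ne(\Dt_M)^*$; conversely, by Proposition \ref{prop6_observability}, if $A_{\V}$ is complete then $L(A_{\V})=(\Dt_M)^*$. On the other hand, by the invariant, for a word $U\in(\Dt_M)^{p}$ the state $\s(\V,U)$ is nonempty (equivalently $\s(\V,U)$ is defined and, being a subset of $\V$ that we must check is nonempty when it arises — note line 5/6 discards $\emptyset$) exactly when there is a pair of distinct initial states witnessing non-reconstructibility along $U$ in the strong sense of Proposition \ref{prop1_reconstructibility}. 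So "$s_j\ne\emptyset$ for every reachable $s$ and every $j$" is equivalent to "for every $p$ and every $U\in(\Dt_M)^{p}$ there are distinct $x_0,\bar x_0$ with $L_{x_0}^{p}(U)[i]\ne L_{\bar x_0}^{p}(U)[i]$ for all $i\in[0,p]$ and $(HL)_{x_0}^{p}(U)=(HL)_{\bar x_0}^{p}(U)$", which by Proposition \ref{prop1_reconstructibility} is exactly non-reconstructibility. Combining the two equivalences via Proposition \ref{prop6_observability} gives the theorem.

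I expect the main obstacle to be stating and proving the invariant cleanly, in particular handling the bookkeeping of the "the two trajectories have been distinct at every earlier time step too" clause (the strengthening supplied by Proposition \ref{prop1_reconstructibility}) alongside the "current outputs agree" clause. One must be careful that the inductive step genuinely preserves the \emph{all $i\in[0,p]$} distinctness, not merely distinctness at the final step, and that $\Gr$'s vertex set already enforces equal outputs at \emph{every} step (since a vertex is a same-output pair and every intermediate pair along an $\E$-path is a vertex), so the output-equality clause of Proposition \ref{prop1_reconstructibility} comes for free from walking in $\Gr$. A secondary, minor, point is checking that reachability of all states of $A_{\V}$ really does hold given the precise control flow of Algorithm \ref{alg5_reconstructibility} (the $S^1_{\temp}$/$S^2_{\temp}$ double-buffering), which is a routine but necessary verification to license the use of Proposition \ref{prop6_observability}.
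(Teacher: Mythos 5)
Your proposal is correct and follows essentially the same route as the paper: both rest on the correspondence between $U$-labelled runs of $A_{\V}$ (equivalently, $U$-labelled walks in $\Gr$) and pairs of trajectories that are distinct at every time step yet output-indistinguishable, combined with Proposition \ref{prop1_reconstructibility}. The only cosmetic difference is that you make this correspondence an explicit inductive invariant and pass through completeness via Proposition \ref{prop6_observability}, whereas the paper argues the two directions directly (the ``only if'' direction by contradiction, with determinism supplying the ``distinct at all earlier times'' strengthening) and invokes Proposition \ref{prop6_observability} only afterwards, when checking the condition in examples.
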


\begin{proof}
	``if'': Assume that $L(A_{\V})=(\Dt_M)^*$. Then for all $p\in\Z_{+}$
	and input sequences $U\in(\Dt_M)^p$, there are $p+1$ states of the DFA $A_{\V}$,
	denoted by $s_0,s_1,\dots,s_p$, such that $\s(s_i,U(i))=s_{i+1}$, $i\in[0,p-1]$,
	where $s_0=\V$, $\s$ is the partial transition function of $A_{\V}$.
	Furthermore, by Definition \ref{pairgraph_reconstructibility} and Algorithm \ref{alg5_reconstructibility},
	there are $p+1$ vertices $v_i=\{x_i,\bar x_i\}\in s_i$ of the weighted pair
	graph, 	$i\in[0,p]$, such that $x_i,\bar x_i\in\Dt_N$, $Hx_i=H\bar x_i$,
	$x_i\ne\bar x_i$, $L_{x_0}^p(U)=x_0x_1\dots x_p$, $L_{\bar x_0}^p(U)=\bar x_0
	\bar x_1\dots \bar x_p$, and $(HL)_{x_0}^p(U)=(HL)_{\bar x_0}^p(U)$.
	Besides, $\e\in L(A_{\V})$ implies that $\V\ne \emptyset$, i.e., there are different
	states of the BCN \eqref{BCN2} that produce the same output. Then by Proposition
	\ref{prop1_reconstructibility}, this BCN is not reconstructible.

	``only if'': Assume that $L(A_{\V})\subsetneq(\Dt_M)^*$, we prove that each input
	sequence in $(\Dt_M)^*\setminus L(A_{\V})$ determines the corresponding current state
	(i.e., each input sequence outside of $L(A_{\V})$ is a homing input sequence).
	Arbitrarily choose $U\in(\Dt_M)^*\setminus L(A_{\V})$.  If $U=\e$, then $\e\notin L(A_{\V})$, and
	$\V=\emptyset$. Hence $H$ is of full column rank, and the initial state can be determined by the initial output.
	Next we assume that $U\ne\e$. For all different states $x_0,\bar x_0\in\Dt_N$,
	$L_{x_0}^{p}(U)[p]\ne L_{\bar x_0}^{p}(U)[p]$ implies $(HL)_{x_0}^{p}(U)\ne (HL)_
	{\bar x_0}^{p}(U)$. Suppose on the contrary that there exist two different states
	$x_0',\bar x_0'\in\Dt_N$ such that
	$L_{x_0'}^{p}(U)[p]\ne L_{\bar x_0'}^{p}(U)[p]$ and $(HL)_{x_0'}^{p}(U)= (HL)_
	{\bar x_0'}^{p}(U)$. Then $U\in L(A_{\V})$ by Algorithm \ref{alg5_reconstructibility},
	which is a contradiction. Hence each $U$ outside of $L(A_{\V})$ is a homing input sequence,
	and the BCN is reconstructible.

\end{proof}

One can use Proposition \ref{prop6_observability} and
Theorem \ref{thm1_reconstructibility} to determine whether a given BCN \eqref{BCN2} is reconstructible
or not. The proof of Theorem \ref{thm1_reconstructibility} shows that every input sequence
$U\in(\Dt_M)^*\setminus L(A_{\V})$ can be used to determine the corresponding current state.

%
%

\subsection{Determining the current state}

We have shown how to determine whether a given BCN \eqref{BCN2} is reconstructible,
and how to find homing input sequences of reconstructible BCNs.
Next we show how to use a homing input sequence to determine the current state.
Actually, similar ideas have been widely used, e.g., to achieve the initial state of
an observable linear control system in the literature, and to diagnose fault occurrence of BCNs
in \cite[Algorithm 2]{Fornasini2015FaultDetectionBCN}.
	
Given a reconstructible BCN \eqref{BCN2}, where the initial state $x_0\in\Dt_N$ is given and unknown.
By Theorem \ref{thm1_reconstructibility}, the language recognized by
the DFA $A_{\V}$ generated by Algorithm
\ref{alg5_reconstructibility} is a proper subset of $(\Dt_M)^*$.
Then for each given homing input sequence
$U=u_0\dots u_{p-1}\in(\Dt_M)^*\setminus L(A_{\V})$ and the corresponding output sequence
$Y=y_0\dots y_p\in(\Dt_Q)^{p+1}$,
Algorithm \ref{alg3_reconstructibility} returns the current state.
$\X_0$ in Algorithm \ref{alg3_reconstructibility}
contains all possible states producing output $y_0$, and hence contains $x_0$.
At each time step $1\le t\le p$, $\X_t$
contains all states that are driven from initial state $x_0$
by input sequence $u_0\dots u_{t-1}$ and correspond to output sequence $y_0\dots y_{t}$.
Then by Theorem \ref{thm1_reconstructibility},
$\X_p$ is a singleton, and the unique element of $\X_p$ is the current state.
Particularly if $U=\e$, then $Y\in \Dt_Q$, and the current state (i.e., the initial state) is
$x_0=H^{-1}Y=H^{T}Y$.

\begin{algorithm}
	\caption{An algorithm for determining the current state of a
	reconstructible BCN \eqref{BCN2}}
	\label{alg3_reconstructibility}

	\begin{algorithmic}[1]
		\REQUIRE A reconstructible BCN \eqref{BCN2}, a homing input sequence $U
		=u_0\dots u_{p-1}\in (\Dt_M)^*\setminus L(A_{\V})$
		of length $p$, and the corresponding output sequence $Y=y_0\dots y_p\in(\Dt_Q)^{p+1}$

		\ENSURE The current state
		\IF{$U=\e$}
		\RETURN $H^{-1}Y$, stop
		\ELSE
		\STATE $\X_0:=\{x|x\in\Dt_N,Hx=y_0\}$
		\FOR{$i=0$; $i<p$; $i++$}
		\STATE $\X_{i+1}:=\emptyset$
		\FOR{all $x\in\X_i$}
		\IF{$(HL)_x^1(u_{i})[1]=y_{i+1}$}
		\STATE
		$\X_{i+1}:=\X_{i+1}\cup\{L_x^1(u_i)[1]\}$
		\ENDIF
		\ENDFOR
		\ENDFOR
		\RETURN the unique element of $\X_p$, stop
		\ENDIF
	\end{algorithmic}
\end{algorithm}

\subsection{An illustrative example}
\label{sec:Examples}

This example illustrates the weighted pair graph, how Algorithm \ref{alg5_reconstructibility} works, and
how to use a homing input sequence to determine the current state.

\begin{example}[\cite{Fornasini2013ObservabilityReconstructibilityofBCN}]\label{exam6_observability}
	Consider the following logical dynamical network:
	\begin{equation}
		\begin{split}
			x(t+1) &= \dt_5[1,4,3,5,4,2,3,3,4,4]u(t)x(t),\\
			y(t) &= \dt_2[1,1,2,1,2]x(t),
		\end{split}
		\label{eqn5_observability}
	\end{equation}
	where $t=0,1,\dots$, $x\in\Dt_5$, $y,u\in\Dt$.

	Note that although $5$ is not a power of $2$, all results of
	this paper are valid for it.
	The weighted pair graph of the logical control network \eqref{eqn5_observability} is shown
	in Fig. \ref{fig12:reconstructibility}. Putting the weighted pair graph
	into Algorithm \ref{alg5_reconstructibility},
	Algorithm \ref{alg5_reconstructibility} returns a DFA $A_{\V}$. The process of
	generating $A_{\V}$ is shwon in Fig. \ref{fig15:reconstructibility}.
	From Fig. \ref{fig15:reconstructibility} it follows that $A_{\V}$ is not complete, then
	by Proposition \ref{prop6_observability} and Theorem \ref{thm1_reconstructibility}, the
	logical control network \eqref{eqn5_observability} is reconstructible.

	Note that $\dt_2^1\dt_2^1\notin L(A_{\V})$ by Fig. \ref{fig15:reconstructibility},
	then $\dt_2^1\dt_2^1$ is a homing input sequence. Next we use $\dt_2^1\dt_2^1$ to determine the
	current state.
	Choose as an unknown initial state $x_0=\dt_5^2$. 
	Then the output sequence is $Y=y_0y_1y_2$,
	where $y_0=y_1=\dt_2^1,y_2=\dt_2^2$.
	According to Algorithm \ref{alg3_reconstructibility}, $\X_0=\{\dt_5^1,\dt_5^2,\dt_5^4\}$,
	$\X_1=\{\dt_5^1,\dt_5^4\}$, $\X_2=\{\dt_5^5\}$.
	Hence the current state is $\dt_5^5$.

	\begin{figure}
	        \centering
\begin{tikzpicture}[>=stealth',shorten >=1pt,auto,node distance=1.5 cm, scale = 1.0, transform shape,
	->,>=stealth,inner sep=2pt,state/.style={shape=circle,draw,top color=red!10,bottom color=blue!30}]

	\node[state] (12) {$12$};
	\node[state] (14) [right of = 12]  		  {$14$};
	\node[state] (24) [right of = 14]  		  {$24$};
	\node[state] (35) [right of = 24]  		  {$35$};

	
	\path [->]
		   (12) edge node {\tiny$1$} (14)
		   (14) edge node {\tiny$2$} (24)
	 	;

        \end{tikzpicture}
	\caption{Weighted pair graph of logical control network \eqref{eqn5_observability},
	where number $ij$ in each circle denotes state pair $\{\dt_5^i,
	\dt_5^j\}$, number $k$ beside each edge denotes
	weight $\{\dt_2^{k}\}$ of the corresponding edge.}
	\label{fig12:reconstructibility}
\end{figure}

	\begin{figure}
        \centering
\begin{tikzpicture}[>=stealth',shorten >=1pt,auto,node distance=1.7 cm, scale = 1.0, transform shape,
	->,>=stealth,inner sep=2pt,state/.style={shape=rectangle,draw,top color=red!10,bottom color=blue!30}]

	\node[initial,accepting,state] (12) {$\begin{array}{c}12\\14\\24\\35\end{array}$};
	\node[] (14) [right of = 12]  		  {};

	\node[initial,accepting,state] (12') [right of = 14] {$\begin{array}{c}12\\14\\24\\35\end{array}$};
	\node[accepting,state] (14') [above right of = 12']  		  {$14$};
	\node[accepting,state] (24') [below right of = 12']  		  {$24$};
	\node[] (14-35') [below right of = 14'] {};

	\node[initial,accepting,state] (12'') [right of = 14-35'] {$\begin{array}{c}12\\14\\24\\35\end{array}$};
	\node[accepting,state] (14'') [above right of = 12'']  		  {$14$};
	\node[accepting,state] (24'') [below right of = 12'']  		  {$24$};
	\node[] (14-35'') [right of = 14''] {};

	\path [->]
		   (12') edge node {\tiny$1$} (14')
		   (12') edge node {\tiny$2$} (24')

		   (12'') edge node {\tiny$1$} (14'')
		   (12'') edge node {\tiny$2$} (24'')
		   (14'') edge node {\tiny$2$} (24'')

		   	 	;

        \end{tikzpicture}
		\caption{Process of Algorithm \ref{alg5_reconstructibility} receiving
		the weighted pair graph of logical control network \eqref{eqn5_observability} and returning
		DFA  $A_{\V}$,
	where number $ij$ in each rectangle denotes state pair $\{\dt_5^i,
	\dt_5^j\}$, number $k$ beside each edge denotes
	weight $\{\dt_2^{k}\}$ of the corresponding edge.}
	\label{fig15:reconstructibility}
	\end{figure}
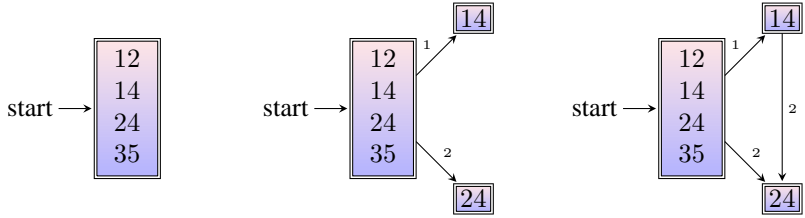

\end{example}

\section{Main results: using the weighted pair graph to determine reconstructibility}
\label{sec:EffectiveAlgorithmForReconstructibility}

In this section, we give more in-depth results on reconstructibility and the weighted pair graph
by using finite automata. And then based on these new results and the results in Section
\ref{sec:DeterminingReconstructibility},
we design a new algorithm (Algorithm \ref{alg4_reconstructibility})
for determining the reconstructibility of the BCN \eqref{BCN2}
directly from weighted pair graphs.
The new algorithm is significantly more efficient than the one
in Section \ref{sec:DeterminingReconstructibility}. After that, 
we analyze its computational complexity.

\subsection{How to directly use the weighted pair graph to determine reconstructibility}

Let us first look at the logical control network
\eqref{eqn5_observability} and its weighted pair graph shown in Fig. 
\ref{fig12:reconstructibility}.
One easily observes that in Fig. \ref{fig12:reconstructibility}
all vertices have outdegree less than $2$, i.e., the number of inputs of 
\eqref{eqn5_observability}, and in the rightmost graph shown in Fig. \ref{fig15:reconstructibility},
some vertex has outdegree less than $2$. On the contrary, one also observes that if for some BCN 
\eqref{BCN2}, each vertex of its weighted pair graph has outdegree $M$,
then in the transition graph of the corresponding DFA $A_{\V}$ generated by Algorithm
\ref{alg5_reconstructibility}, each vertex also has outdegree $M$.
Is it possible that establishing a {\it general theory} from these intuitive observations, in other words,
directly using the weighted pair graph to determine the completeness of the corresponding DFA 
$A_{\V}$ for every BCN  \eqref{BCN2}? If yes, the computational cost of determining the reconstructibility of the BCN 
\eqref{BCN2} will be significantly reduced. In this subsection, we give an affirmative answer to this problem.
The following proposition is a key result in answering the problem.

\begin{proposition}\label{prop3_reconstructibility}
	Given a weighted directed graph $\Gr=(\V,\E,\W,2^{\Sig})$, where the vertex set $\V$
	and the alphabet $\Sig$ are nonempty and finite,
	the edge set is $\E\subset \V\times\V$,
	and the weight function is $\W:\E\to 2^{\Sig}$. Assume that
	\begin{equation}
		\begin{split}
		&\text{for all }v\in\V \text{
		and }u_1,u_2\in\V,\text{ if }(v,u_1)\in\E,
		(v,u_2)\in\E\\&\text{and }u_1\ne u_2,\text{ then }
		\W( (v,u_1))\cap\W( (v,u_2))=\emptyset.
		\end{split}
		\label{eqn2_reconstructibility}
	\end{equation}
	Denote the set of vertices of $\Gr$ that have outdegree less than $|\Sig|$, or from which
	there is a path to a vertex having outdegree less than $|\Sig|$ by $\V_{|\Sig|}$.
	Substitute $\Gr$ into Algorithm \ref{alg5_reconstructibility} to generate
	a DFA $A_{\V}$. The following \eqref{item5_reconstructibility} and \eqref{item6_reconstructibility}
	hold.
	\begin{enumerate}[(i)]
		\item\label{item5_reconstructibility}
			$L(A_{\V})=\Sig^*$ iff graph $\Gr$ has a complete subgraph.
		\item\label{item6_reconstructibility}
			Graph $\Gr$ has a complete subgraph iff $\V_{|\Sig|}\subsetneq\V$.
	\end{enumerate}
\end{proposition}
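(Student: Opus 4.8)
The plan is to use hypothesis \eqref{eqn2_reconstructibility} to read $\Gr$ as (the transition graph of) a \emph{partial} deterministic automaton on state set $\V$: by \eqref{eqn2_reconstructibility}, for each vertex $v$ and each letter $a\in\Sig$ there is at most one vertex, which I will write $v\cdot a$, with $(v,v\cdot a)\in\E$ and $a\in\W((v,v\cdot a))$. In this notation the transition function $\s$ of the DFA $A_\V$ produced by Algorithm \ref{alg5_reconstructibility} is exactly the subset transition $\s(s,a)=\{v\cdot a : v\in s,\ v\cdot a\ \text{defined}\}$, so in particular $|\s(s,a)|\le|s|$ --- the size of a state never increases along $A_\V$, and states are always nonempty (the algorithm never creates $\emptyset$). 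I will also record two elementary facts, both obtained by unwinding definitions: (a) a nonempty $\V'\subseteq\V$ generates a complete subgraph of $\Gr$ iff $v\cdot a$ is defined and lies in $\V'$ for every $v\in\V'$ and every $a\in\Sig$ (here \eqref{eqn2_reconstructibility} is exactly what makes ``outdegree $|\Sig|$'' equivalent to ``$v\cdot a$ defined for all $a$''); and (b) since $F=S$ for $A_\V$ and every state of $A_\V$ is reachable from $\V$ by construction, Proposition \ref{prop6_observability} gives $L(A_\V)=\Sig^*$ iff $A_\V$ is complete.

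Next I would prove \eqref{item6_reconstructibility} by showing $\V_{\mathrm{g}}:=\V\setminus\V_{|\Sig|}$ is the largest set generating a complete subgraph. If $v\in\V_{\mathrm{g}}$ then $\outdeg(v)=|\Sig|$, so $v\cdot a$ is defined for all $a$; and $v\cdot a\in\V_{\mathrm{g}}$, since any path from $v\cdot a$ to a vertex of outdegree $<|\Sig|$ would extend to one from $v$, contradicting $v\notin\V_{|\Sig|}$. Hence, by (a), $\V_{\mathrm{g}}$ generates a complete subgraph whenever it is nonempty. Conversely, if $\V'$ generates a complete subgraph then each $v\in\V'$ has $\outdeg(v)=|\Sig|$ and, by (a) and induction along edges, every vertex reachable from $v$ lies in $\V'$ and hence has outdegree $|\Sig|$; thus $v\in\V_{\mathrm{g}}$, i.e. $\V'\subseteq\V_{\mathrm{g}}$. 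So $\Gr$ has a complete subgraph iff $\V_{\mathrm{g}}\neq\emptyset$ iff $\V_{|\Sig|}\subsetneq\V$.

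For \eqref{item5_reconstructibility}, the easy direction: if $\V'$ generates a complete subgraph, then every reachable state of $A_\V$ meets $\V'$ --- the initial state $\V$ contains $\V'$, and if $v\in s\cap\V'$ then $v\cdot a\in\s(s,a)\cap\V'$ for every $a$ by (a) --- so all transitions are defined, $A_\V$ is complete, and $L(A_\V)=\Sig^*$ by (b). For the converse, suppose toward a contradiction that $\Gr$ has no complete subgraph yet $L(A_\V)=\Sig^*$, so $A_\V$ is complete. Pick a reachable state $s$ of \emph{minimum} cardinality $k\ge1$. For each $a$, $\s(s,a)$ is reachable and nonempty, so $|\s(s,a)|\ge k$ by minimality, while $|\s(s,a)|\le|s|=k$; hence $|\s(s,a)|=k$, which forces $v\mapsto v\cdot a$ to be defined on all of $s$. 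Iterating, every $\s(s,w)$ has cardinality $k$ and $v\mapsto v\cdot a$ is total on it, so $\V':=\bigcup_{w}\s(s,w)$ is nonempty, has $v\cdot a$ defined on it for all $a$, and is closed under $\cdot a$ (because $\s(s,wa)=\{v\cdot a:v\in\s(s,w)\}$); by (a), $\V'$ generates a complete subgraph --- a contradiction.

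The step I expect to be the crux is this converse of \eqref{item5_reconstructibility}: one must manufacture a genuine complete subgraph of $\Gr$ out of nothing but completeness of the subset automaton $A_\V$. The idea that makes it go through is the monotonicity $|\s(s,a)|\le|s|$, which forces a minimum-cardinality reachable state to be ``saturated'' (on it the partial map $v\mapsto v\cdot a$ is in fact total), and then the union of the states reachable from it is the complete subgraph we need. Everything else is bookkeeping with the definitions of outdegree and generated subgraph, plus one invocation of Proposition \ref{prop6_observability}.
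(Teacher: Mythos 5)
Your proof is correct and follows essentially the same route as the paper's: part \eqref{item6_reconstructibility} via the set $\V\setminus\V_{|\Sig|}$, and the crux of part \eqref{item5_reconstructibility} via a minimum-cardinality (reachable) state of $A_{\V}$, which condition \eqref{eqn2_reconstructibility} forces to be saturated, so that the union of the states reachable from it generates a complete subgraph. The only cosmetic difference is that the paper proves the ``if'' direction of \eqref{item5_reconstructibility} by feeding the complete subgraph into Algorithm \ref{alg5_reconstructibility} and comparing the resulting languages, whereas you argue directly that every reachable state of $A_{\V}$ meets the complete subgraph's vertex set.
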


\begin{proof} \eqref{item5_reconstructibility}
	``if'': Arbitrarily given a subgraph $\Gr'=(\V',\E',\W',2^{\Sig})$
	of graph $\Gr$, substitute $\Gr'$ into Algorithm \ref{alg5_reconstructibility}
	to generate a DFA $A_{\V'}$. Then $L(A_{\V'})\subset L(A_{\V})\subset \Sig^*$.

	Assume that in $\Gr'$,
	for all $v\in\V'$, $\outdeg(v)=|\Sig|$. Then in the transition graph of
	$A_{\V'}$, $\outdeg(\V')=|\Sig|$. Furthermore, all children of $\V'$ have outdegree
	$|\Sig|$, all children of all children of $\V'$ also have outdegree $|\Sig|$, and so on.
	Hence all vertices of the transition graph of $A_{\V'}$ have outdegree $|\Sig|$.
	That is, $A_{\V'}$ is complete by Proposition \ref{prop6_observability}.
	Hence $L(A_{\V'})=\Sig^*=L(A_{\V})$.
	
	``only if'': Assume that $L(A_{\V})=\Sig^*$. Let $A_{\V}$ be $(S,\Sig,\s,\V,S)$, where $S\subset 2^{\V}$.
	Then in the transition graph of $A_{\V}$,
	for all $s\in S$, $\outdeg(s)=|\Sig|$ by Proposition \ref{prop6_observability}.

	For any given state $\bar s\in S$, denote $\{s'\in S|\text{there is }u\in \Sig^*\text{
	such that }\s(\bar s,u)=s'\}$ by $S_{\bar s}$.
	Regarding the subgraph generated by $S_{\bar s}$ as a
	DFA $A_{\bar s}$, where $\bar s$ is the initial state,
	all elements of $S_{\bar s}$ are final states, then in the transition graph of $A_{\bar s}$,
	for all $s\in S_{\bar s}$, $\outdeg(s)=|\Sig|$.
	Then $L(A_{\bar s})=\Sig^*$ by Proposition \ref{prop6_observability}.

	Next we choose $\hat s\in S$ such that $|\hat s|=
	\min_{s\in S}|s|$, i.e., $\hat s$ has the minimal number of vertices of
	graph $\Gr$. Then by
	\eqref{eqn2_reconstructibility}, for all $s\in S_{\hat s}$, $|s|=|\hat s|$.
	Again by \eqref{eqn2_reconstructibility}, in $\Gr$, for all vertices
	$v\in\cup_{s\in S_{\hat s}}s$, $\outdeg(v)=|\Sig|$.
	Then in the subgraph of $\Gr$
	generated by $\cup_{s\in S_{\hat s}}s$, one also has that  for all vertices
	$v\in\cup_{s\in S_{\hat s}}s$, $\outdeg(v)=|\Sig|$. Hence the subgraph of $\Gr$
	generated by $\cup_{s\in S_{\hat s}}s$ is complete.

	\eqref{item6_reconstructibility} ``if'': Assume that $\V_{|\Sig|}\subsetneq\V$. We next
	we prove that subgraph $\Gr''$ of $\Gr$
	generated by $\V\setminus\V_{|\Sig|}$ is complete, that is, in $\Gr''$, each vertex
	has outdegree $|\Sig|$. Assume that in $\Gr''$,
	there is a vertex $v''\in\V\setminus\V_{|\Sig|}$ having
	outdegree less than $|\Sig|$. Since $v''\notin\V_{|\Sig|}$, $v''$ has outdegree $|\Sig|$
	in $\Gr$. Then $v''$ has a child in $\V_{|\Sig|}$, and then $v''\in\V_{|\Sig|}$, which is a
	contradiction.

	``only if'': Denote an arbitrary given complete subgraph of $\Gr$ by $\Gr''$.
	Then each vertex in $\Gr''$
	has outdegree $|\Sig|$ both in $\Gr$ and $\Gr''$, and all children of all vertices in
	$\Gr''$ belong to $\Gr''$. Hence $\V_{|\Sig|}\subsetneq\V$.
\end{proof}

Using Proposition \ref{prop3_reconstructibility}, Theorem \ref{thm1_reconstructibility}
can be directly simplified to the following Theorem \ref{thm7_reconstructibility}
which provides a significantly more effective algorithm for
determining reconstructibility than the one shown in Theorem \ref{thm1_reconstructibility}.
Theorem \ref{thm7_reconstructibility} is the main result of this paper.

\begin{theorem}\label{thm7_reconstructibility}
	A BCN \eqref{BCN2} is not reconstructible iff 
	its weighted pair graph has a complete subgraph.
\end{theorem}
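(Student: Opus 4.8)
The plan is to read the claim off from two results already in hand. Theorem~\ref{thm1_reconstructibility} says that the BCN \eqref{BCN2} fails to be reconstructible exactly when $L(A_{\V})=(\Dt_M)^*$, where $A_{\V}$ is the DFA that Algorithm~\ref{alg5_reconstructibility} produces on input the weighted pair graph $\Gr$. Part \eqref{item5_reconstructibility} of Proposition~\ref{prop3_reconstructibility} says that for a weighted directed graph satisfying the separation hypothesis \eqref{eqn2_reconstructibility}, the DFA produced by the very same Algorithm~\ref{alg5_reconstructibility} recognizes $\Sig^*$ if and only if the graph has a complete subgraph. Since the weighted pair graph carries the alphabet $2^{\Dt_M}$, it therefore suffices to (a)~verify that $\Gr$ satisfies \eqref{eqn2_reconstructibility} with $\Sig:=\Dt_M$, and (b)~dispose of the degenerate case $\V=\emptyset$, which Proposition~\ref{prop3_reconstructibility} excludes.

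For (a), fix a vertex $v=\{x_1,x_1'\}$ of $\Gr$ and two distinct children $u_1=\{x_2,x_2'\}$ and $u_2=\{x_3,x_3'\}$, and suppose some $u\in\Dt_M$ lies in $\W((v,u_1))\cap\W((v,u_2))$. By Definition~\ref{pairgraph_reconstructibility}, $u\in\W((v,u_1))$ means precisely that the unordered pair $\{Lux_1,Lux_1'\}$ equals $\{x_2,x_2'\}=u_1$; this is well posed because $L$ is a logical matrix, so $Lux_1$ and $Lux_1'$ are single elements of $\Dt_N$. The same reasoning gives $\{Lux_1,Lux_1'\}=u_2$, whence $u_1=u_2$, contradicting $u_1\ne u_2$. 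Hence distinct edges out of any vertex of $\Gr$ carry disjoint weights, which is exactly \eqref{eqn2_reconstructibility}.

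With (a) in place, for $\V\ne\emptyset$ the two reductions compose directly: the BCN is not reconstructible $\iff L(A_{\V})=(\Dt_M)^*$ (Theorem~\ref{thm1_reconstructibility}) $\iff\Gr$ has a complete subgraph (Proposition~\ref{prop3_reconstructibility}\eqref{item5_reconstructibility}). For (b), if $\V=\emptyset$ then no two distinct states of \eqref{BCN2} share an output, so $H$ has full column rank; the initial output then determines the initial state and, by determinism of \eqref{BCN2}, every subsequent state, so the BCN is reconstructible (with the empty homing input sequence), while the empty weighted pair graph has no nonempty complete subgraph. Thus both sides of the asserted equivalence are false, and combining the two cases proves the theorem.

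I do not expect a genuine obstacle here: the substance is entirely contained in the earlier Theorem~\ref{thm1_reconstructibility} and Proposition~\ref{prop3_reconstructibility}. The only point needing care is the bookkeeping in step~(a) --- matching the unordered-pair description of edges and weights in Definition~\ref{pairgraph_reconstructibility} against the abstract separation condition \eqref{eqn2_reconstructibility} --- together with the observation that the DFA $A_{\V}$ appearing in Theorem~\ref{thm1_reconstructibility} and in Proposition~\ref{prop3_reconstructibility} is literally the output of one and the same algorithm applied to the same graph $\Gr$, so there is nothing further to reconcile.
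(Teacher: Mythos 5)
Your proof is correct and follows essentially the same route as the paper, which obtains Theorem~\ref{thm7_reconstructibility} directly by combining Theorem~\ref{thm1_reconstructibility} with Proposition~\ref{prop3_reconstructibility}\eqref{item5_reconstructibility}. Your explicit verification that the weighted pair graph satisfies \eqref{eqn2_reconstructibility} and your treatment of the case $\V=\emptyset$ are details the paper leaves implicit, but they do not constitute a different argument.
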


Next we design Algorithm \ref{alg4_reconstructibility}
that provides an effective method for determining the reconstructibility of
the BCN \eqref{BCN2} based on Proposition \ref{prop3_reconstructibility} and
Theorem \ref{thm7_reconstructibility}.
The advantage of this algorithm with respect to Theorem \ref{thm1_reconstructibility}
is that it avoids constructing DFAs, so the computational cost is significantly reduced.
In Algorithm \ref{alg4_reconstructibility}, the set $\V_M$ can be found as follows:
\romannumeral1) find all vertices of $\Gr$ that have outdegree less than $M$, denote the set of
these vertices by $\V_0$, find all parents of the vertices of $\V_0$, where these parents are 
outside $\V_0$, the set of these parents is denoted by $\V_1$, remove $\V_0$ and all edges to vertices
of $\V_0$ from $\Gr$; \romannumeral2) find all parents of the vertices of $\V_1$, where these parents
are outside $\V_1$, the set of these parents is denoted by $\V_2$, remove $\V_1$ and all edges to vertices
of $\V_1$ from the remainder of $\Gr$; and so on. Finally we obtain the vertex set $\V_f=\V\setminus\V_M$,
then $\V_f\ne\emptyset$ iff $\V_M\subsetneq\V$. Note that when constructing
the weighted pair graph, we can directly obtain $\V_0$. Thus, Algorithm \ref{alg4_reconstructibility}
runs in time at most $|\V|$.

\begin{algorithm}
	\caption{An algorithm for determining the reconstructibility of a
	given BCN \eqref{BCN2}}
	\label{alg4_reconstructibility}

	\begin{algorithmic}[1]
		\REQUIRE A given BCN \eqref{BCN2} and its weighted pair graph $\Gr=(\V,\E,\W)$ 
		\ENSURE ``Yes'', if the BCN is reconstructible; ``No'', otherwise
		\STATE
		$\V_{M}:=\{v\in\V|\outdeg(v)<M,\text{ or there is a path from }v\text{
		to some vertex }v'\in\V\text{ satisfying }\outdeg(v')<M\}$
		\IF {$\V_{M}\subsetneq\V$}
		\RETURN ``No'', stop
		\ELSE
		\RETURN ``Yes'', stop
		\ENDIF
	\end{algorithmic}
\end{algorithm}

\begin{example}\label{exam3_reconstructibility}
	Recall the BCN \eqref{eqn5_observability}.
	Its weighted pair graph (shown in Fig. \ref{fig12:reconstructibility}) has no complete subgraph,
	then by Theorem \ref{thm7_reconstructibility}, the BCN is reconstructible.

\end{example}

\subsection{Computational complexity analysis}\label{subsec:complexityanalysis}

In this subsection,
we analyze the computational complexity of Theorem \ref{thm1_reconstructibility} and
Algorithm \ref{alg4_reconstructibility}.

Consider a BCN \eqref{BCN2} with $n$ state nodes and $m$ input nodes. Its weighted pair graph $\Gr$
has at most $((2^n)^2-2^n)/2=2^{2n-1}-2^{n-1}$ vertices, where the upper bound is tight and can be reached
(cf. Example \ref{exam6_reconstructibility}).
Since the DFA $A_{\V}$ in Theorem \ref{thm1_reconstructibility} is a power set construction of
$\Gr$, the size of $A_{\V}$ is bounded by $2^{2^{2n-1}-2^{n-1}}+2^{2^{2n-1}-2^{n-1}+m}$
(the number $2^{2^{2n-1}-2^{n-1}}$ of states plus the number $2^{2^{2n-1}-2^{n-1}+m}$ of transitions).
When constructing $A_{\V}$, one can obtain whether $A_{\V}$ is complete, hence the computational complexity
of using Theorem \ref{thm1_reconstructibility} to determine reconstructibility is $O(2^{2^{2n-1}+m})$.
The computational cost of constructing the weighted pair graph is bounded by 
$2^{2n-1}-2^{n-1}+(2^{2n-1}-2^{n-1})2^m$ (the number $2^{2n-1}-2^{n-1}$ of vertices plus the number
$(2^{2n-1}-2^{n-1})2^m$ of edges), and Algorithm \ref{alg4_reconstructibility} runs in time at most
$|\V|\le 2^{2n-1}-2^{n-1}$, hence the overall computational cost of using Algorithm \ref{alg4_reconstructibility}
to determine reconstructibility is bounded by $2^{2n-1}-2^{n-1}+(2^{2n-1}-2^{n-1})2^m+2^{2n-1}-2^{n-1}=2^{2n}-2^n+
(2^{2n-1}-2^{n-1})2^m$. Hence Algorithm \ref{alg4_reconstructibility} runs in time $O(2^{2n}+2^{2n-1+m})$,
which is significantly more efficient than Theorem \ref{thm1_reconstructibility}.

Does there exist a more effective algorithm for determining reconstructibility?
Next we show that the problem of determining reconstructibility
is {\bf NP}-hard, which means there exists no polynomial time algorithm for determining reconstructibility 
unless {\bf P}={\bf NP}, i.e., it is very unlikely that there exists no 
polynomial time algorithm for determining reconstructibility.

In \cite{Laschov2013ObservabilityofBN:GraphApproach}, 
it is proved that the problem of determining the observability of BNs/BCNs is {\bf NP}-hard
by reducing the well-known {\bf NP}-complete Boolean satisfiability problem to the problem of determining 
the observability of BNs/BCNs.
This reduction can be directly used to prove the {\bf NP}-hardness of the problem of determining
the reconstructibility of BNs/BCNs.

The Boolean satisfiability problem is stated as below: given a Boolean function with $n$ arguments $g:\D^n\to \D$,
determine whether $g$ is satisfiable, i.e., whether there exist 
$x_1^*,\dots,x_n^*\in\D$ such that $g(x_1^*,\dots,x_n^*)=1$.

Consider the following BN with $n$ state nodes and $q$ output nodes:
\begin{equation}
	\begin{split}
		&x(t+1) = f(x(t)),\\
		&y(t) = h(x(t)),
	\end{split}
	\label{BN1}
\end{equation}
where $t=0,1,\dots,$ for each such $t$, $x(t)\in\D^n$, $y(t)\in\D^q$, $f:\D^n\to \D^n$ and $h:\D^n\to\D^q$
are Boolean mappings.

A BN \eqref{BN1} is called observable \cite{Laschov2013ObservabilityofBN:GraphApproach}, if for every two different
initial states $(x_1^*,\dots,x_n^*)$ and $(x_1^{**},\dots,x_n^{**})$ both in $\D^n$, the corresponding
output sequences are different.

Here we call a BN \eqref{BN1} reconstructible, if there exists $p\in\N$ such that for every two different
initial states $(x_1^*,\dots,x_n^*)$ and $(x_1^{**},\dots,x_n^{**})$ both in $\D^n$,
if the corresponding states at time step $p$ are different, then the corresponding
output sequences are different.

	Construct a set of BNs as follows:
	\begin{equation}
		\begin{split}
			&x_k(t+1) = x_k(t)\oplus (x_{k+1}(t)\odot \cdots\odot x_{n}(t))=:f_k(x_1(t),\dots,x_n(t)),\quad k\in[1,n-1],\\
			&x_n(t+1) = x_n(t)\oplus 1=:f_n(x_1(t),\dots,x_n(t)),\\
			&y(t) = x_1(t)\odot g(x_2(t),\dots,x_n(t)),
		\end{split}
		\label{eqn:SATtoReconstructBN}
	\end{equation}
	where $t=0,1,\dots$, for each such $t$,
	$x_1(t),\dots,x_n(t)\in\D$, $\oplus$ and $\odot$ denote the  addition and multiplication modulo $2$, 
	respectively, and $f_1,\dots,f_n:\D^n\to\D$ and $g:\D^{n-1} \to \D$ are Boolean functions.
	It can be seen that the state transition graph of the BN \eqref{eqn:SATtoReconstructBN} 
	is a simple cycle of length $2^n$. For example, when $n = 3$, the state transition graph is 
	$000 \rightarrow 001 \rightarrow  010 \rightarrow 011 \rightarrow100 \rightarrow101 \rightarrow110 \rightarrow111 \rightarrow000$. Hence the BN \eqref{eqn:SATtoReconstructBN} is observable iff it is reconstructible.

It is proved in \cite[Proposition 10]{Laschov2013ObservabilityofBN:GraphApproach} that for the BN
\eqref{eqn:SATtoReconstructBN}, $g$ is satisfiable iff the BN is observable. 
Based on this property and the polynomial time construction of \eqref{eqn:SATtoReconstructBN} from $g$,
the problem of determining the observability of the BN \eqref{eqn:SATtoReconstructBN} is {\bf NP}-hard.
Then by the equivalence of the observability and reconstructibility of the BN \eqref{eqn:SATtoReconstructBN},
the problem of determining the reconstructibility of the BN \eqref{eqn:SATtoReconstructBN} is also {\bf NP}-hard.
As a corollary, the problem of determining the reconstructibility of the BCN \eqref{BCN1} is also
{\bf NP}-hard by reducing the reconstructibility of the BN \eqref{eqn:SATtoReconstructBN} to that of the BCN \eqref{BCN1}.

The following Example \ref{exam6_reconstructibility} shows that for a given BCN \eqref{BCN2},
the upper bound $2^{2n-1}-2^{n-1}$ on the number of vertices of its weighted pair graph can be
reached.

\begin{example}\label{exam6_reconstructibility}
	Consider a set of logical control networks
\begin{equation}
	\begin{split}
		x(t+1) &= [L_1,\dots,L_N]x(t)u(t),\\
		y(t) &= \dt_N^1,
	\end{split}
	\label{eqn6_reconstructibility}
\end{equation}
where $t=0,1,\dots$, for each such $t$, $x(t),y(t),u(t)\in\Dt_N$, $N$ is an integer no less than $3$,
$L_i\in\LM_{N\times N}$, $i\in[1,N]$, for all $i,j\in[1,N]$,
\begin{equation}
	\col_{j}(L_i)=\left\{
	\begin{array}[]{ll}
		\dt_N^i, &\text{ if }j\ne i,\\
		\dt_N^{i \mod N+1}, &\text{ if }j=i.\\
	\end{array}\right.
\end{equation}

For each integer $N$ no less than $3$, the weighted pair graph $\Gr_N=(\V_N,\E_N,\W_N,2^{\Dt_N})$ 
of the logical control network \eqref{eqn6_reconstructibility} satisfies the following properties:

\begin{enumerate}[(i)]
	\item\label{item1_reconstructibility_worst} $\Gr_N$ has
		exactly $(N^{2}-N)/2$ vertices and $\V_N=\{\{\dt_N^i,\dt_N^j\}|
		i,j\in[1,N], i\ne j\}$, because the output function is constant.
	\item\label{item2_reconstructibility_worst}
Each vertex has outdegree $2$.
	\item\label{item3_reconstructibility_worst}
For all different $i,j\in[1,N]$, $\W_N((\{\dt_N^i,\dt_N^j\},\{\dt_N^i,\dt_N^{i\mod N+1}\}))
=\{\dt_N^i\}$. 
	\item\label{item4_reconstructibility_worst}
For each $i\in[1,N]$, vertex $\{\dt_N^{i},\dt_N^{i\mod N+1}\}$ has a self-loop with
weight $\{\dt_N^i\}$.
	\item\label{item5_reconstructibility_worst}
$\{\dt_N^1,\dt_N^2\}\xrightarrow[]{\{\dt_N^2\}}\{\dt_N^2,\dt_N^3\}\xrightarrow[]{\{\dt_N^3\}}
\cdots\xrightarrow[]{\{\dt_N^N\}}\{\dt_N^N,\dt_N^1\}\xrightarrow[]{\{\dt_N^1\}}\{\dt_N^1,\dt_N^2\}$
is the unique cycle that is not a self-loop.
\end{enumerate}

By \eqref{item2_reconstructibility_worst} we have none of these weighted pair graphs has a complete subgraph.
Then by Theorem \ref{thm7_reconstructibility}, all these logical control networks
are reconstructible, and the overall
computational cost is $(N^2-N)/2+(N^2-N)N/2$.

When $N=4$, the weighed pair graph is shown in Fig. \ref{fig2:reconstructibility}.

	\begin{figure}
        \centering
\begin{tikzpicture}[>=stealth',shorten >=1pt,auto,node distance=1.7 cm, scale = 1.0, transform shape,
	->,>=stealth,inner sep=2pt,state/.style={shape=circle,draw,top color=red!10,bottom color=blue!30}]

	\node[state] (13) {$13$};
	\node[state] (12) [above left of = 13]  		  {$12$};
	\node[state] (41) [below left of = 13]  		  {$41$};
	\node[state] (23) [above right of = 13]         {$23$};
	\node[state] (34) [below right of = 13]         {$34$};
	\node[state] (24) [below right of = 34]         {$24$};

	\path [->]
	(12) edge [loop above] node {\tiny$1$} (12)
	(12) edge node {\tiny$2$} (23)
	(23) edge [loop above] node {\tiny$2$} (23)
	(23) edge node {\tiny$3$} (34)
	(34) edge [loop right] node {\tiny$3$} (34)
	(34) edge node {\tiny$4$} (41)
	(41) edge [loop left] node {\tiny$4$} (41)
	(41) edge node {\tiny$1$} (12)
	(13) edge node {\tiny$1$} (12)
	(13) edge node {\tiny$3$} (34)
	(24) edge [bend right] node {\tiny$2$} (23)
	(24) edge [bend left] node {\tiny$4$} (41)
	 	;

        \end{tikzpicture}
		\caption{Weighted pair graph of logical control network \eqref{eqn6_reconstructibility}
		with $N$ equal to $4$,
	where number $ij$ in each circle denotes state pair $\{\dt_4^i,
	\dt_4^j\}$, number $k$ beside each edge denotes
	weight $\{\dt_4^{k}\}$ of the corresponding edge.}
	\label{fig2:reconstructibility}
\end{figure}
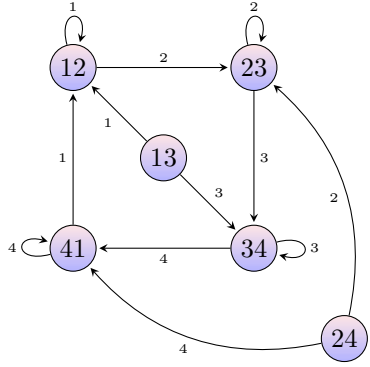

For each integer $N$ no less than $3$, put $\Gr_N$ into Algorithm \ref{alg5_reconstructibility} to obtain the DFA
$A_{\V_N}=(S_N,\Dt_N,\s_N,\V_N,S_N)$, where $S_N=\{\V_N\}\cup \bigcup_{k=1}^{N}\{\{\{\dt_N^{k},\dt_N^{k\mod N+1}\}\}\}$,
for all $k\in[1,N]$,
\begin{equation*}\begin{split}
&\s_N(\V_N,\dt_N^k)\\
&=\{\{\dt_N^k,\dt_N^{k\mod N+1}\}\},\\
&\s_N(\{\{\dt_N^k,\dt_N^{k\mod N+1}\}\},\dt_N^k)\\
&=\{\{\dt_N^k,\dt_N^{k\mod N+1}\}\},\\
&\s_N(\{\{\dt_N^k,\dt_N^{k\mod N+1}\}\},\dt_N^{k\mod N+1})\\
&=\{\{\dt_N^{k\mod N+1},\dt_N^{(k\mod N+1)\mod N+1}\}\},
\end{split}\end{equation*}
$\s_N$ is not well defined at any other element of $S_N\times \Dt_N$.
Hence in each DFA $A_{\V_N}$,  initial state $\V_N$ has outdegree $N$, and all other states
have outdegree $2$. That is, none of these DFAs is complete.
Then by Theorem \ref{thm1_reconstructibility} and Proposition \ref{prop6_observability},
we also have that all of these logical control networks are reconstructible.

Here we also choose $N=4$, the DFA with respect to the weighed pair graph generated by
Algorithm \ref{alg5_reconstructibility} is depicted in Fig. \ref{fig4:reconstructibility}.

	\begin{figure}
        \centering
\begin{tikzpicture}[>=stealth',shorten >=1pt,auto,node distance=3.5 cm, scale = 1.0, transform shape,
	->,>=stealth,inner sep=2pt,state/.style={shape=rectangle,draw,top color=red!10,bottom color=blue!30},
	point/.style={circle,inner sep=0pt,minimum size=2pt,fill=},
	skip loop/.style={to path={-- ++(0,#1) -| (\tikztotarget)}}]

	\node[initial,accepting,state] (13) {$\begin{array}{c}12,23,34\\41,13,24\end{array}$};
	\node[accepting,state] (12) [above left of = 13]  		  {$12$};
	\node[accepting,state] (41) [below left of = 13]  		  {$41$};
	\node[accepting,state] (23) [above right of = 13]         {$23$};
	\node[accepting,state] (34) [below right of = 13]         {$34$};

	\path [->]
	(12) edge [loop above] node {\tiny$1$} (12)
	(12) edge node {\tiny$2$} (23)
	(23) edge [loop above] node {\tiny$2$} (23)
	(23) edge node {\tiny$3$} (34)
	(34) edge [loop right] node {\tiny$3$} (34)
	(34) edge node {\tiny$4$} (41)
	(41) edge [loop left] node {\tiny$4$} (41)
	(41) edge node {\tiny$1$} (12)
	(13) edge node {\tiny$1$} (12)
	(13) edge node {\tiny$3$} (34)
	(13) edge node {\tiny$2$} (23)
	(13) edge node {\tiny$4$} (41)

	 	;

        \end{tikzpicture}
		\caption{DFA with respect to the weighted pair graph of logical control network
		\eqref{eqn6_reconstructibility}
		with $N$ equal to $4$ generated by Algorithm \ref{alg5_reconstructibility},
	where number $ij$ in each rectangle denotes state pair $\{\dt_4^i,
	\dt_4^j\}$, number $k$ beside each edge denotes
	weight $\{\dt_4^{k}\}$ of the corresponding edge.}
	\label{fig4:reconstructibility}
\end{figure}

\end{example}

Note that each DFA $A_{\V}$ in Example \ref{exam6_reconstructibility} has exactly $N+1$ states,
which is less than the number $(N^2-N)/2$ of vertices of the weighted pair graph $\Gr_N$.
In this case using Algorithm \ref{alg4_reconstructibility} to determine the reconstructibility
of the logical control networks \eqref{eqn6_reconstructibility} is not much more effective than 
using Theorem  \ref{thm1_reconstructibility} and Proposition \ref{prop6_observability}.
Next we give one more example to show that Algorithm \ref{alg4_reconstructibility}
does perform significantly more effecient than Theorem \ref{thm1_reconstructibility}.

\begin{example}\label{exam2_reconstructibility}
	Consider a set of logical control networks
\begin{equation}
	\begin{split}
		x(t+1) &= [L_1,\dots,L_N]u(t)x(t),\\
		y(t) &= \dt_N^1,
	\end{split}
	\label{eqn1_reconstructibility}
\end{equation}
where $t=0,1,\dots$, for each such $t$, $x(t),y(t),u(t)\in\Dt_N$, $N$ is again an integer no less than $3$,
$L_i\in\LM_{N\times N}$, $i\in[1,N]$, for all $i,j\in[1,N]$,
\begin{equation}
	\col_{j}(L_i)=\left\{
	\begin{array}[]{ll}
		\dt_N^i, &\text{ if }j\ne i,\\
		\dt_N^{i \mod N+1}, &\text{ if }j=i.\\
	\end{array}\right.
\end{equation}

For each integer $N$ no less than $3$, the weighted pair graphs $\Gr_N=(\V_N,\E_N,\W_N,2^{\Dt_N})$ of the
logical control network \eqref{eqn1_reconstructibility}
satisfies the following properties:

\begin{enumerate}[(i)]
	\item\label{item8_reconstructibility_worst} $\Gr_N$ has
		exactly $(N^{2}-N)/2$ vertices and $\V_N=\{\{\dt_N^i,\dt_N^j\}|
		i,j\in[1,N], i\ne j\}$, because the output function is constant.
	\item\label{item9_reconstructibility_worst}
		Vertex $\{\dt_N^i,\dt_N^{i\mod N+1}\}$ has outdegree $N-1$ for each $i\in[1,N]$.
		All other vertices have outdegree $N$.
	\item\label{item10_reconstructibility_worst}
		For all different $i,j\in[1,N]$, $\W_N((\{\dt_N^i,\dt_N^j\},\{\dt_N^i,\dt_N^{j}\}))
		=\{\dt_N^k|k\in[1,N],k\ne i,j\}$.
		For all $i\in[1,N]$, $\W_N((\{\dt_N^i,\dt_N^{i\mod N+1}\},\{\dt_N^i,\dt_N^{
		(i\mod N+1)\mod N+1}\})) =\{\dt_N^{i\mod N+1}\}$.
		For all different $i,j\in[1,N]$ satisfying that $i\ne j\mod N+1$ and $j\ne i\mod N+1$,
		$\W_N((\{\dt_N^i,\dt_N^{j}\},\{\dt_N^{i\mod N+1},\dt_N^{j}\})) =\{\dt_N^{i}\}$.
	\item\label{item11_reconstructibility_worst}
		For all vertices $\{\dt_N^i,\dt_N^j\}$ satisfying that $i<j$ and $j\ge 3$, there is a path
		from $\{\dt_N^1,\dt_N^2\}$ to $\{\dt_N^i,\dt_N^j\}$:
		$\{\dt_N^1,\dt_N^2\}\xrightarrow[]{\{\dt_N^2\}}\{\dt_N^1,\dt_N^3\}\xrightarrow[]{\{\dt_N^3\}}
		\cdots\xrightarrow[]{\{\dt_N^{j-1}\}}\{\dt_N^1,\dt_N^j\}
		\xrightarrow[]{\{\dt_N^1\}}\{\dt_N^2,\dt_N^j\}\xrightarrow[]{\{\dt_N^2\}}
		\cdots\xrightarrow[]{\{\dt_N^{i-1}\}}\{\dt_N^i,\dt_N^j\}$.
	\item\label{item12_reconstructibility_worst}
		For all vertices $\{\dt_N^i,\dt_N^j\}$ satisfying that $i<j$ and $j\ge 3$, there is a path
		from $\{\dt_N^i,\dt_N^j\}$ to $\{\dt_N^1,\dt_N^2\}$:
		if $i=1$, the path is $\{\dt_N^1,\dt_N^j\}\xrightarrow[]{\{\dt_N^j\}}\{\dt_N^1,\dt_N^{j+1}\}
		\xrightarrow[]{\{\dt_N^{j+1}\}}\cdots\xrightarrow[]{\{\dt_N^{N-1}\}}\{\dt_N^1,\dt_N^{N}\}
		\xrightarrow[]{\{\dt_N^{1}\}}\{\dt_N^2,\dt_N^{N}\}\xrightarrow[]{\{\dt_N^{N}\}}\{\dt_N^2,\dt_N^{1}\}
		=\{\dt_N^1,\dt_N^{2}\}$; else, the path is $\{\dt_N^i,\dt_N^j\}\xrightarrow[]{\{\dt_N^j\}}
		\{\dt_N^i,\dt_N^{j+1}\}
		\xrightarrow[]{\{\dt_N^{j+1}\}}\cdots\xrightarrow[]{\{\dt_N^{N-1}\}}\{\dt_N^i,\dt_N^{N}\}
		\xrightarrow[]{\{\dt_N^{N}\}}\{\dt_N^i,\dt_N^{1}\}
		\xrightarrow[]{\{\dt_N^{i}\}}\{\dt_N^{i+1},\dt_N^{1}\}\xrightarrow[]{\{\dt_N^{i+1}\}}\cdots
		\xrightarrow[]{\{\dt_N^{N-1}\}}\{\dt_N^{N},\dt_N^{1}\}
		\xrightarrow[]{\{\dt_N^{1}\}}\{\dt_N^{N},\dt_N^{2}\}
		\xrightarrow[]{\{\dt_N^{N}\}}\{\dt_N^{1},\dt_N^{2}\}$.
	\item\label{item13_reconstructibility_worst}
		By \eqref{item10_reconstructibility_worst}, \eqref{item11_reconstructibility_worst}
		and \eqref{item12_reconstructibility_worst}, each graph $\Gr_N$ is strongly connected.
\end{enumerate}

Since $\outdeg(\{\dt_N^1,\dt_N^2\})=N-1<N$ and these weighted pair graphs are strongly connected,
by Algorithm \ref{alg4_reconstructibility}, these logical control networks \eqref{eqn1_reconstructibility}
are all reconstructible, and the overall computational cost is $(N^2-N)/2+(N^2-N)N/2$.

When $N=4$, the weighed pair graph is shown in Fig. \ref{fig9:reconstructibility}.

	\begin{figure}
        \centering
\begin{tikzpicture}[>=stealth',shorten >=1pt,auto,node distance=1.7 cm, scale = 1.0, transform shape,
	->,>=stealth,inner sep=2pt,state/.style={shape=circle,draw,top color=red!10,bottom color=blue!30}]

	\node[state] (12) {$12$};
	\node[state] (34) [below right of = 12]  		  {$34$};
	\node[state] (13) [above right of = 34]  		  {$13$};
	\node[state] (23) [right of = 13]         {$23$};
	\node[state] (14) [below right of = 34]         {$41$};
	\node[state] (24) [below of = 14]         {$24$};

	\path [->]
	(12) edge [loop above] node {\tiny$3,4$} (12)
	(13) edge [loop above] node {\tiny$2,4$} (13)
	(23) edge [loop above] node {\tiny$1,4$} (23)
	(14) edge [loop right] node {\tiny$2,3$} (14)
	(34) edge [loop below] node {\tiny$1,2$} (34)
	(24) edge [loop right] node {\tiny$1,3$} (24)

	(12) edge node {\tiny$2$} (13)
	(13) edge node {\tiny$1$} (23)
	(13) edge node {\tiny$3$} (14)
	(23) edge [bend left] node {\tiny$3$} (24)
	(34) edge node {\tiny$4$} (13)
	(24) edge [bend left] node {\tiny$4$} (12)
	(14) edge node {\tiny$1$} (24)
	(24) edge node {\tiny$2$} (34)

	 	;

        \end{tikzpicture}
		\caption{Weighted pair graph of logical control network \eqref{eqn1_reconstructibility}
		with $N$ equal to $4$,
	where number $ij$ in each circle denotes state pair $\{\dt_4^i,
	\dt_4^j\}$, number $k$ beside each edge denotes
	weight $\{\dt_4^{k}\}$ of the corresponding edge.}
	\label{fig9:reconstructibility}
\end{figure}
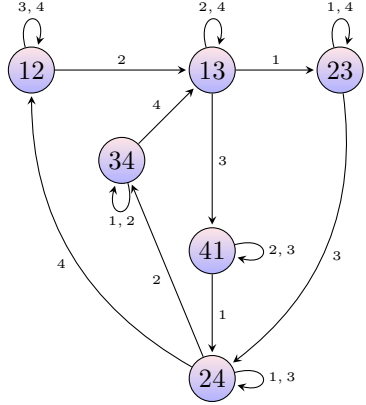

For each integer $N$ no less than $3$, put graph $\Gr_N$ into Algorithm \ref{alg5_reconstructibility} to obtain the DFA
$A_{\V_N}=(S_N,\Dt_N,\s_N,\V_N,S_N)$, where $|S_N|=C_N^0+C_N^1+\cdots+C_N^{N-2}=2^N-N-1$,
where each $C_N^i$ denotes the number of states of $A_{\V_N}$ with cardinality $C_{N-i}^2$, e.g., when $N=4$
and $i=0$, the unique state of $A_{\V_4}$ with cardinality $C_{4-0}^2=6$ is $\V_4$. Since
$|\{ \mathcal{S}\subset[1,N]||\mathcal{S}|>1\}=:{\mathcal I}|=2^N-N-1=|S_N|$, 
states of $A_{\V_N}$ can be indexed by the set ${\mathcal I}$. For each ${\cal S\in I}$,
denote $s_{\cal S}:=\{\{\dt_N^i,\dt_N^j\}|i,j\in{\cal S},i\ne j\}$. Then $S_N=\{s_{\cal S}|\cal S\in I\}$, 
$\V_N=s_{[1,N]}\in{\cal S}_N$. In the transition graph of the DFA $A_{\V_N}$, each state $s_{\cal S}$ satisfying 
$|{\cal S}|>2$ has outdegree $N$, each state $s_{\cal S}$ satisfying $|{\cal S}|=2$ has outdegree $N$ or $N-1$. For
each state $\cal S\in I$, the children of $s_{\cal S}$ have cardinality $|\cal S|$ or $|{\cal S}|-1$. 
Since there is a state $s_{\cal S}$ satisfying $|{\cal S}|=2$ and $\outdeg(s_{\cal S})=N-1<N$, the DFA $A_{\V_N}$ is not
complete. 
Then by Theorem \ref{thm1_reconstructibility}  and Proposition \ref{prop6_observability},
all these logical control networks are reconstructible, and the overall computational cost is
$2^N-N-1+(2^N-N-1)N$. Hence in this case Algorithm \ref{alg4_reconstructibility} performs significantly 
more effective than Theorem \ref{thm1_reconstructibility}.

Choose $N=4$, the DFA with respect to the weighed pair graph generated by
Algorithm \ref{alg5_reconstructibility} is drawn in Fig. \ref{fig6:reconstructibility}.

	\begin{figure}
        \centering
\begin{tikzpicture}[>=stealth',shorten >=1pt,auto,node distance=2.3 cm, scale = 1.0, transform shape,
	->,>=stealth,inner sep=2pt,state/.style={shape=rectangle,draw,top color=red!10,bottom color=blue!30},
	point/.style={circle,inner sep=0pt,minimum size=2pt,fill=},
	skip loop/.style={to path={-- ++(0,#1) -| (\tikztotarget)}}]

	\node[initial,accepting,state]  (1234)  {$\begin{array}{c}12,23,34\\41,13,24\end{array}$};
	\node[accepting,state] (134) [below left of = 1234]  		  {$13,14,34$};
	\node[accepting,state] (124) [below right of = 1234]  		  {$12,14,24$};
	\node[accepting,state] (234) [left of = 134]  		  {$23,24,34$};
	\node[accepting,state] (123) [right of = 124]  		  {$12,13,23$};
	\node[accepting,state] (24) [below of = 234] {$24$};
	\node[accepting,state] (14) [below of = 134] {$14$};
	\node[accepting,state] (13) [below of = 124] {$13$};
	\node[accepting,state] (12) [below of = 123] {$12$};
	\node[accepting,state] (23) [right of = 12] {$23$};
	\node[accepting,state] (34) [left of = 24] {$34$};

	\path [->]
	(1234) edge node {\tiny$1$} (234)
	(1234) edge node {\tiny$2$} (134)
	(1234) edge node {\tiny$3$} (124)
	(1234) edge node {\tiny$4$} (123)

	(123) edge [out = 60, in = 30, loop] node {\tiny$4$} (123)
	(124) edge [loop below] node {\tiny$3$} (124)
	(134) edge [out = 30, in = 10, loop] node {\tiny$2$} (134)
	(234) edge [in = 120, out = 150, loop] node {\tiny$1$} (234)
	(234) edge [out = 90, in = 90] node {\tiny$4$} (123)

	(134) edge node {\tiny$1$} (234)
	(124) edge node {\tiny$2$} (134)
	(123) edge node {\tiny$3$} (124)

	(234) edge node {\tiny$2$} (34)
	(234) edge node {\tiny$3$} (24)
	(134) edge node {\tiny$3$} (14)
	(134) edge node {\tiny$4$} (13)
	(124) edge node {\tiny$4$} (12)
	(124) edge [out = -165, in = 80] node {\tiny$1$} (24)
	(123) edge node {\tiny$1$} (23)
	(123) edge [bend right] node {\tiny$2$} (13)

	(34) edge [loop above] node {\tiny$1,2$} (34)
	(24) edge [out = 150, in = 120, loop] node {\tiny$1,3$} (24)
	(14) edge [out = 150, in = 120, loop] node {\tiny$2,3$} (14)
	(13) edge [out = 60, in = 30, loop] node {\tiny$2,4$} (13)
	(12) edge [loop right] node {\tiny$3,4$} (12)
	(23) edge [loop above] node {\tiny$1,4$} (23)

	(24) edge node {\tiny$2$} (34)
	(14) edge node {\tiny$1$} (24)
	(13) edge node {\tiny$3$} (14)
	(12) edge node {\tiny$2$} (13)

	(34) edge [out = -45, in = -135] node {\tiny$4$} (13)
	(24) edge [out = -45, in = -135] node {\tiny$4$} (12)
	(13) edge [out = -45, in = -135] node {\tiny$1$} (23)
	(23) edge [out = -120, in = -20] node {\tiny$3$} (24)

 	;

        \end{tikzpicture}
		\caption{DFA with respect to the weighted pair graph of logical control network
		\eqref{eqn1_reconstructibility}
		with $N$ equal to $4$ generated by Algorithm \ref{alg5_reconstructibility},
	where number $ij$ in each rectangle denotes state pair $\{\dt_4^i,
	\dt_4^j\}$, number $k$ beside each edge denotes
	weight $\{\dt_4^{k}\}$ of the corresponding edge.}
	\label{fig6:reconstructibility}
\end{figure}

\end{example}

\begin{remark}
	The algorithm designed in \cite{Shu2007Detectability_DES} for determining the detectability
	of DESs can also be used to determine the reconstructibility of BCNs, and 
	the algorithm designed in
	\cite{Shu2007Detectability_DES} runs in time doubly exponential in
	the number of state nodes of BCNs due to the exhaustive solution space (i.e., the set of finite input 
	sequences) search. While
	Algorithm \ref{alg4_reconstructibility} runs in time exponential in the number of state nodes of BCNs,
	because it directly depends on the structural information of the targeted BCN.
\end{remark}

\section{How to determine the reconstructibility studied in
\cite{Fornasini2013ObservabilityReconstructibilityofBCN} by using
the weighted pair graph}
\label{sec:Fornasini'sReconstructibility}

In this section we give a theorem for determining the reconstructibility
given in \cite[Definition 4]{Fornasini2013ObservabilityReconstructibilityofBCN}
(i.e., Definition \ref{def2_reconstructibility}) by using the concept of
weighted pair graphs. The theories of finite automata and formal languages are not
necessary. Hence
determining the reconstructibility in the sense of Definition \ref{def2_reconstructibility} is much easier
than determining the one in the sense of Definition \ref{def1_reconstructibility}.
One will also see that the condition in Theorem \ref{thm3_reconstructibility}
is simpler to be checked than the one given in
\cite[Theorem 4]{Fornasini2013ObservabilityReconstructibilityofBCN}, although the algorithms given in
Theorem \ref{thm3_reconstructibility} and \cite[Theorem 4]{Fornasini2013ObservabilityReconstructibilityofBCN}
have almost the same computational complexity.

\cite[Theorem 4]{Fornasini2013ObservabilityReconstructibilityofBCN} shows that
a BCN \eqref{BCN2} is reconstructible in the sense of Definition \ref{def2_reconstructibility}
iff for every pair of different periodic state-input trajectories of the same minimal period $k$
and the same input trajectory, the corresponding output trajectories are also different and periodic of 
minimal period $k$. Actually an upper bound on $k$ is $NM$, because there are totally $NM$ (state, input)-pairs.
Besides, one easily sees that Definition \ref{def2_reconstructibility}
is equivalent to the strong detectability defined in \cite{Shu2007Detectability_DES}.

\begin{definition}[\cite{Fornasini2013ObservabilityReconstructibilityofBCN}]
	A BCN \eqref{BCN2} is called reconstructible, if there is $p\in\N$ such that
	for each input sequence $U\in(\Dt_M)^{p}$, $U$ and the corresponding
	output sequence uniquely determine the current state.
	\label{def2_reconstructibility}
\end{definition}

From the proof of \cite[Theorem 4]{Fornasini2013ObservabilityReconstructibilityofBCN},
we  use graph theory to show an intuitive description of this theorem.
For a given BCN \eqref{BCN2}, construct a weighted directed graph (called {\it state transition graph})
$(V,E,W,2^{\Dt_M})$, where $V=\{(x,Hx)|x\in\Dt_N\}$,  for all $(x,Hx),(x',Hx')\in V$,
$( (x,Hx),(x',Hx'))\in E$ iff there is $u\in\Dt_M$ such that $x'=Lux$, for all $( (x,Hx),(x',\\Hx'))\in E$,
$W(( (x,Hx),(x',Hx')))=\{u\in\Dt_M|x'=Lux\}$. Then the BCN \eqref{BCN2} is
reconstructible iff in its state transition graph, every pair of different cycles with the same
length and the same input sequence have different output sequences.
Take the logical control network \eqref{eqn6_reconstructibility} with $N$
equal to $4$ for example, the state transition graph is depicted in Fig. \ref{fig5:reconstructibility}.
From Fig. \ref{fig5:reconstructibility} we find two distinct cycles $\dt_4^3\xrightarrow[]{\dt_4^2}
\dt_4^2\xrightarrow[]{\dt_4^2}\dt_4^3$
and $\dt_4^2\xrightarrow[]{\dt_4^2}\dt_4^3\xrightarrow[]{\dt_4^2}\dt_4^2$
with the same length $2$, the same input sequence
$\dt_4^2\dt_4^2$, and the same output sequence $\dt_4^1\dt_4^1$. Then this logical control network
is not reconstructible in the sense of
Definition \ref{def2_reconstructibility}.
We have shown that this logical control network is reconstructible in the sense of Definition
\ref{def1_reconstructibility} (see Example \ref{exam6_reconstructibility}),
hence these two types of reconstructibility are {\it not equivalent}.

	\begin{figure}
        \centering
\begin{tikzpicture}[>=stealth',shorten >=1pt,auto,node distance=1.5 cm, scale = 1.0, transform shape,
	->,>=stealth,inner sep=2pt,state/.style={shape=circle,draw,top color=red!10,bottom color=blue!30},
	skip loop/.style={to path={-- ++(0,#1) -| (\tikztotarget)}}]

	\node[state] (1) {$\dt_4^1/\dt_4^1$};
	\node (1') [below of = 1] {};
	\node (2') [right of = 1'] {};
	\node (2'') [left of = 1'] {};
	\node (3') [right of = 2'] {};
	\node (3'') [left of = 2''] {};
	\node (4') [right of = 3'] {};
	\node (4'') [left of = 3''] {};
	\node[state] (2) [right of = 4']  		  {$\dt_4^2/\dt_4^1$};
	\node[state] (3) [left of = 4'']         {$\dt_4^3/\dt_4^1$};
	\node[state] (4) [below of = 1']         {$\dt_4^4/\dt_4^1$};

	\path [->]
	(2.145) edge node {\tiny$u=\dt_4^1$} (1.-30)
	(1.-20) edge node {\tiny$u=\dt_4^1,\dt_4^2$} (2.135)
	(1.-150) edge node {\tiny$u=\dt_4^3$} (3.35)
	(3.45) edge node {\tiny$u=\dt_4^1$} (1.-160)
	(2.-135) edge node {\tiny$u=\dt_4^4$} (4.20)
	(4.30) edge node {\tiny$u=\dt_4^2$} (2.-145)
	(3.-20) edge node {\tiny$u=\dt_4^3,\dt_4^4$} (4.135)
	(4.145) edge node {\tiny$u=\dt_4^3$} (3.-30)
	(1.-80) edge node {\tiny$u=\dt_4^4$} (4.80)
	(4.100) edge node {\tiny$u=\dt_4^4,\dt_4^1$} (1.-100)
	(2) edge [out = -135, in = -45] node {\tiny$u=\dt_4^2,\dt_4^3$} (3)
	(3) edge [out = 45, in = 135] node {\tiny$u=\dt_4^2$} (2)

		 	;

        \end{tikzpicture}
		\caption{State transition graph of logical control network
		\eqref{eqn6_reconstructibility}
		with $N$ equal to $4$, where the vector in each circle denotes a state and the output it produces,
		and each weight denotes the corresponding inputs.}
	\label{fig5:reconstructibility}
\end{figure}

Note that using \cite[Theorem 4]{Fornasini2013ObservabilityReconstructibilityofBCN}
to verify the reconstructibility in the sense of Definition \ref{def2_reconstructibility},
one needs to test every pair of different cycles with the same length and the same input sequence.
Next we give a new equivalent condition for this type of reconstructibility that is simpler to
be checked than the condition in \cite[Theorem 4]{Fornasini2013ObservabilityReconstructibilityofBCN}.
Although usually the size of the weighted pair graph is larger than that of the state transition graph,
one will know whether the
BCN is reconstructible or not once the weighted pair graph has been constructed. Our result is stated as follows.

\begin{theorem}
	A BCN \eqref{BCN2} is not reconstructible in the sense of Definition \ref{def2_reconstructibility}
	iff the weighted pair graph of the BCN has a cycle.
	\label{thm3_reconstructibility}
\end{theorem}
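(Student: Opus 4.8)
The plan is to build a translation dictionary between finite paths in the weighted pair graph $\Gr=(\V,\E,\W)$ and pairs of state trajectories of the BCN \eqref{BCN2} that are output-indistinguishable but terminate in distinct states, and then to invoke the elementary fact that a finite digraph admits paths of every length iff it contains a cycle.

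First I would unwind Definition \ref{def2_reconstructibility}: the BCN is \emph{not} reconstructible in this sense exactly when, for every $p\in\N$, there is an input word $U\in(\Dt_M)^{p}$ and states $x_0\ne\bar x_0\in\Dt_N$ with $(HL)_{x_0}^{p}(U)=(HL)_{\bar x_0}^{p}(U)$ and $L_{x_0}^{p}(U)[p]\ne L_{\bar x_0}^{p}(U)[p]$. The easy observation here is that such a pair of trajectories is automatically distinct at every time step: each update $x\mapsto Lu(t)x$ is a single-valued map, so $L_{x_0}^{p}(U)[j]=L_{\bar x_0}^{p}(U)[j]$ for some $j\le p$ would force equality for all later times and in particular at time $p$, contradicting $x_p\ne\bar x_p$ (the same phenomenon recorded, for Definition \ref{def1_reconstructibility}, in Proposition \ref{prop1_reconstructibility}). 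Hence each intermediate unordered pair $\{L_{x_0}^{p}(U)[i],L_{\bar x_0}^{p}(U)[i]\}$ is a vertex of $\Gr$, and $u_i$ lies in the weight of the edge joining the $i$-th of these vertices to the $(i+1)$-st; so the data yield a path of length $p$ in $\Gr$. Conversely, given any path $v_0\to v_1\to\dots\to v_p$ in $\Gr$ and any letters $u_i\in\W((v_i,v_{i+1}))$, I would fix an arbitrary ordering $v_0=\{a_0,b_0\}$ and propagate $a_{i+1}:=Lu_ia_i$, $b_{i+1}:=Lu_ib_i$; the defining property of the weights forces $\{a_{i+1},b_{i+1}\}=v_{i+1}$ at every step, so $a_i\ne b_i$ and $Ha_i=Hb_i$ throughout, which says precisely that $U=u_0\cdots u_{p-1}$ together with its output sequence fails to determine the current state. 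Thus, for each fixed $p$, $\Gr$ has a path of length $p$ iff some input word of length $p$ fails to determine the current state.

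The remaining step is purely graph-theoretic. Since $\V$ is finite, $\Gr$ has paths of length $p$ for every $p\in\N$ iff $\Gr$ contains a cycle: if $\Gr$ is acyclic, any path repeats no vertex and hence has length at most $|\V|-1$, so there is no path of length $|\V|$; conversely, from a cycle $w_0\to\dots\to w_k=w_0$ (with $k\ge1$, self-loops permitted) one produces a path of any prescribed length $p=mk+r$, $0\le r<k$, by traversing the cycle $m$ times and then following it $r$ more steps. Chaining the three steps gives: the BCN is not reconstructible in the sense of Definition \ref{def2_reconstructibility} iff for every $p$ some length-$p$ input word fails, iff $\Gr$ has a path of every length, iff $\Gr$ has a cycle.

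The routine points requiring care are the consistent labeling of the unordered-pair vertices when turning a graph path into a concrete trajectory pair, the degenerate case $p=0$ (where a "path of length $0$" just means $\V\ne\emptyset$, matching the observation that $H$ is then not of full column rank), and the bookkeeping that going once around a cycle may transpose the two branches of the trajectory pair — harmless here, since only path lengths in $\Gr$ matter. I do not expect a genuine obstacle: the whole content is the correspondence between $\Gr$-paths and indistinguishable trajectory pairs, which is immediate from Definition \ref{pairgraph_reconstructibility} and the determinism of \eqref{BCN2}.
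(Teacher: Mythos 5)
Your proposal is correct and follows essentially the same route as the paper: the correspondence between paths in the weighted pair graph and output-indistinguishable trajectory pairs ending in distinct states, combined with the fact that a finite digraph has arbitrarily long paths iff it has a cycle (the paper phrases the cycle direction via an infinite path whose prefixes all fail, and the acyclic direction via the longest path length $p$ forcing every word of length $p+1$ to determine the current state). Your write-up merely makes explicit some points the paper leaves implicit, such as the determinism argument that the two trajectories stay distinct at every step, the unordered-pair bookkeeping, and the degenerate case $p=0$.
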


\begin{proof}
	``if'': Assume that the weighted pair graph $\Gr=(\V,\E,\W)$ of the BCN \eqref{BCN2} has a cycle.
	$\Gr\ne\emptyset$ implies that $H$ is not of full column rank. Due to the existence of a cycle,
	there is a path $v_0\rightarrow v_1\rightarrow\dots\rightarrow
	(v_{p+1}\rightarrow\dots\rightarrow v_{p+T+1})^{\infty}$ of length $\infty$
	in the graph, where $p,T\in\N$,
	$(\cdot)^{\infty}$ means the {\it concatenation} of
	infinitely many copies of $\cdot$, and some of $v_0,\dots,v_{p+T+1}$
	may be the same. For each $i\in[0,p+T]$, choose $u_i$ from $\W( (v_i,v_{i+1}))$, and
	choose $u_{p+T+1}$ from $\W( (v_{p+T+1},v_{p+1}))$.
	Then the infinite input sequence $u_0\dots u_p(u_{p+1}\\
	\dots u_{p+T+1})^{\infty}=:U\in(\Dt_M)^{\N}$
	satisfies that for all $s\in\N$, the finite input sequence $U[0,s]$
	and the corresponding output sequence cannot uniquely determine the current state.
	Hence the BCN is not reconstructible.

	``only if'': Assume that the weighted pair graph $\Gr=(\V,\E,\W)$ of
	the BCN \eqref{BCN2} has no cycles. If $\Gr=\emptyset$, then $H$ is of full column rank, and
	the BCN is reconstructible. Next we assume that $\Gr\ne\emptyset$.

	Since $\Gr$ has no cycles, all paths are of finite length. Denote the length of the longest
	paths by $p$. Then for all input sequences $U\in(\Dt_M)^{p+1}$ and distinct states
	$x_0,\bar x_0\in\Dt_N$ satisfying $Hx_0=H\bar x_0$,
	$L_{x_0}^{p+1}(U)[p+1]\ne L_{\bar x_0}^{p+1}(U)[p+1]$ implies
	that $(HL)_{x_0}^{p+1}(U)\ne(HL)_{\bar x_0}^{p+1}(U)$.
	Hence the BCN is reconstructible.
\end{proof}

\begin{example}
	Consider the set of logical control networks \eqref{eqn6_reconstructibility}.
	In \eqref{item4_reconstructibility_worst} and \eqref{item5_reconstructibility_worst}
	of Example \ref{exam6_reconstructibility}
	we have shown that there exist cycles in the weighted pair graphs of these logical
	control networks. Then by Theorem \ref{thm3_reconstructibility}, none of these networks
	is reconstructible in the sense of Definition \ref{def2_reconstructibility}.
\end{example}

Next we analyze the computational complexity of using Theorem
\ref{thm3_reconstructibility} or \cite[Theorem 4]{Fornasini2013ObservabilityReconstructibilityofBCN}
to determine the reconstuctibility of the BCN \eqref{BCN2}
in the sense of Definition \ref{def2_reconstructibility}.
It is well known that the computational complexity of detecting the existence of a cycle in a directed graph 
$(\V,\E)$ is $O(|\V|+|\E|)$ \cite{Even2012GraphAlgorithms}.
Then the computational complexity of using Theorem \ref{thm3_reconstructibility} 
to determine this type of reconstructibility is $O(2|\V|+2|\E|)=O(2^{2n}+2^{2n+m})$, which is
proportional to the size of the weighted pair graph of the BCN. The computational complexity of using
\cite[Theorem 4]{Fornasini2013ObservabilityReconstructibilityofBCN} 
to determine this type of reconstructibility is $O(2^{2n-1+m+q})$,
because constructing the state transition graph costs $2^{n+1}+2^{n+m}$, and in the worst case that
every state forms a self-loop, checking cycles costs $(2^{2n-1}-2^{n-1})2^{m+q}$.

%
%
%
%
%
%

\section{Concluding remarks}\label{sec:conclusions}

In this paper, we defined a new weighted pair graph for a Boolean control network (BCN),
and used the graph to
design algorithms for determining a new reconstructibility (Definition \ref{def1_reconstructibility}) of BCNs.
We first designed a doubly exponential time algorithm for determining the reconstructibility, 
and then after proving more properties of the weighted pair graph, we found an exponential time algorithm.
After that, we showed that it is {\bf NP}-hard to determine the reconstructibility.

Besides, by using the weighted pair graph, we also designed an exponential time algorithm
for determining an existing reconstructibility (Definition \ref{def2_reconstructibility}) of BCNs.
It is easy to see that a BN \eqref{eqn:SATtoReconstructBN} is reconstructible in the sense of Definition
\ref{def1_reconstructibility} iff it is reconstructible in the sense of Definition \ref{def2_reconstructibility},
hence by the similar procedure as in Subsection \ref{subsec:complexityanalysis}
we have determining the reconstructibility of the BCN \eqref{BCN1} in the sense of Definition 
\ref{def2_reconstructibility} is also {\bf NP}-hard.

\section*{Acknowledgments}
The authors would like to thank Prof. W. Murray Wonham from University of Toronto,
Canada for the specific discussions on the new concept of reconstructibility and the
computational complexity of the new algorithm presented in this paper,
and also would like to express sincere gratitude to the referees and the AE
for their comments that improve the readability of the paper.
\end{document}